\titleformat{\subsection}[runin]{\bf}{\thesubsection.}{3pt}{}
\DeclareMathOperator{\MCG}{\mathcal{MCG}}
\DeclareMathOperator{\PMCG}{\mathcal{PMCG}}
\DeclareMathOperator{\homeo}{Homeo}
\DeclareMathOperator{\Id}{Id}
\DeclareMathOperator{\Out}{Out}
\DeclareMathOperator{\cb}{CB}
\DeclareMathOperator{\rk}{rank}
\newcommand{\rkcb}{\rk_{\cb}}
\newtheorem{theorem}{Theorem}[section]
\newtheorem{lemma}[theorem]{Lemma}
\newtheorem{proposition}[theorem]{Proposition}
\newtheorem{corollary}[theorem]{Corollary}
\theoremstyle{definition}
\newtheorem{problem}[theorem]{Problem}
\theoremstyle{remark}
\newtheorem{remark}[theorem]{Remark}
\definecolor{myPurp}{RGB}{105,73,148}
\definecolor{myRed}{RGB}{228,37,53}
\definecolor{myCyan}{RGB}{0,95,114}
\newcommand{\qeda}{\hfill \ensuremath{\Box}}
\renewcommand\thesubsection{\thesection.\Alph{subsection}}
\numberwithin{equation}{section}
\newcommand\blfootnote[1]{%
  \begingroup
  \renewcommand\thefootnote{}\footnote{#1}%
  \addtocounter{footnote}{-1}%
  \endgroup
}
\title{{\bf Non-amenability of mapping class groups of infinite-type surfaces and graphs}}
\author{{\sc Yusen Long}}
\date{}
\begin{document}
\maketitle
\blfootnote{\emph{Date}: January 2026.}

\begin{abstract}
This paper completely determines the non-amenability of the mapping class groups of infinite-type surfaces, the mapping class groups of locally finite infinite graphs of higher ranks, gives an example of non-amenable stabiliser of a point at infinity of a coarsely bounded generated hyperbolic Polish group, and exhibits a class of mapping class groups of trees or rank-one graphs that are amenable.
\end{abstract}
\noindent{\it Keywords}: amenability, mapping class groups, infinite-type surfaces, locally finite infinite graph, Cantor set.

\vspace{0.25cm}
\noindent{\it 2020 Mathematics subject classification}: 57M60, 54H11

\section{Introduction}

In \cite{mccarthy1985tits} and \cite{ivanov1986algebraic}, McCarthy and Ivanov independently established a {\it Tits alternative} for subgroups in the mapping class groups of orientable finite-type surfaces, {\it viz.} any subgroup of the mapping class group \(\MCG(S)\) of a finite-type surface \(S\) either contains a solvable (actually, abelian) subgroup of finite index, or contains a non-abelian free group \(\mathbb{F}_n\) on \(n\geq 2\) generators. One of the central themes in geometric group theory is determining whether various groups satisfy the Tits alternative.

In the recent decades, mapping class groups of infinite-type surfaces (see \S \ref{s2b}), often called ``{\it big mapping class group}'', have received a lot of attention. A natural question that one can ask about big mapping class groups is whether they also have the Tits alternative.

The answer is negative: first, Funar and Kapoujian exhibited a big mapping class group containing Thompson's group \cite[Proposition 2.4]{funar2004universal} and thus not satisfying the Tits alternative. Other examples of big mapping class groups failing the Tits alternative were given in \cite{lanier2020centers} by Lanier and Loving as well as in \cite{aougab2021isometry} by Aougab, Patel and Vlamis, who showed that many big mapping class groups contain every countable group as discrete subgroup. We remark that, together non-amenability of some Burnside groups \cite{adian1979burnside,adyan1983random} and some finitely presented groups \cite{ol2003non}, this implies that big mapping class groups can admit non-amenable closed subgroup containing no non-abelian free group. More generally, Allock showed that most big mapping class groups fail to have the Tits alternative \cite{allcock2021most}.

Recall that a topological group is {\it amenable} if there exists an invariant probability measure on every compact space on which the group acts continuously. A classical result is that every virtually solvable group is an amenable discrete group, whereas any amenable discrete group cannot contain \(\mathbb{F}_2\) as a subgroup. Hence, the Tits alternative can also be thought of as a strengthening of the amenable/non-amenable dichotomy of the subgroups in the concerned group, equipped with discrete topology, {\it i.e.} a subgroup is non-amenable if and only if it contains non-abelian free groups.

Although the Tits alternative fails, one may still hope that a similar strengthening of amenable/non-amenable dichotomy in big mapping class group is true for Polish topology. In particular, we may still try to understand how amenable closed subgroups of big mapping class groups look like and to see if the big mapping class groups are themselves amenable or not.

It turns out that if a closed subgroup of a big mapping class group is amenable, then it must be nowhere dense. Equivalently, we have the following theorem:

\begin{theorem}\label{thm: big mcg}
Let \(S\) be an infinite-type surface. Then every open subgroup of \(\MCG(S)\) is not amenable. In particular, the group \(\MCG(S)\) itself is not amenable.
\end{theorem}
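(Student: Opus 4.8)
The plan is to prove non-amenability by exhibiting, inside every open subgroup of $\MCG(S)$, a continuous action on a compact space that admits no invariant probability measure — or, equivalently and more concretely, by producing a free subgroup $\mathbb{F}_2$ that is suitably "visible" to the topology. The key structural fact I would exploit is that for an infinite-type surface $S$, the surface contains, within any neighborhood of the ends (and hence inside the support of any open subgroup), a subsurface rich enough to host independent pseudo-Anosov or independent Dehn-twist configurations. So the first step is to unpack what an open subgroup of $\MCG(S)$ looks like: in the standard (permutation / compact-open) topology on $\MCG(S)$, a basis of neighborhoods of the identity is given by pointwise stabilizers of finite subsets of isotopy classes of curves, so an open subgroup $U$ contains the pointwise stabilizer of some finite multicurve $c$.

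Second, I would localize the problem to the complementary subsurface. The pointwise stabilizer of $c$ contains the mapping class group of each component of $S \setminus c$ (extended by the identity), and because $S$ is of infinite type, at least one such component $S'$ is itself of infinite type, or at least has positive complexity sufficient to support a nonabelian free group of mapping classes. Thus it suffices to locate $\mathbb{F}_2 \le \MCG(S')$ acting on a compact space with no invariant measure; one then shows this same action, or an associated boundary action, witnesses non-amenability of $U$ since $\MCG(S') \le U$. The natural compact space to use is a limit set / space of ends or the Gromov boundary of the relevant curve complex, on which a nonabelian free group of pseudo-Anosovs acts with a ping-pong dynamics (north-south-type dynamics on $\mathcal{PML}$ or on the boundary), forcing the nonexistence of an invariant probability measure.

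The main technical obstacle, I expect, is the passage from "contains $\mathbb{F}_2$ as a discrete subgroup" to "is non-amenable as a \emph{topological} group." Because of the failure of the Tits alternative noted above, mere containment of a free group is not by itself the cleanest route, and one must be careful that amenability is being tested in the topological sense (invariant measures on compact $G$-spaces), not merely as an abstract group. The cleanest way around this is to build a specific continuous action of $U$ on a compact metrizable space $X$ — for instance an action on a space of markings, ends, or a suitable boundary — restrict attention to the free subgroup, and verify via ping-pong that the induced action already fails to admit an invariant probability measure; since amenability of $U$ would force such a measure, $U$ cannot be amenable. I would handle the two complexity regimes separately: when $S'$ supports two independent pseudo-Anosovs I use their joint action on $\mathcal{PML}(S')$, and when it does not (low-complexity-but-infinite-type corners), I fall back on independent Dehn twists or half-twists permuting ends to generate the required free dynamics.

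Finally, to conclude the ``In particular'' clause, I simply observe that $\MCG(S)$ is itself an open (indeed the whole) subgroup, so non-amenability of all open subgroups immediately gives non-amenability of $\MCG(S)$; no separate argument is needed.
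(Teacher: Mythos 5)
Your skeleton matches the paper's: an open subgroup contains the pointwise stabiliser \(\MCG(S)_{(F)}\) of a finite multicurve \(F\), and one looks for non-amenable dynamics supported in the complement. You also correctly identify the crux --- upgrading ``contains \(\mathbb{F}_2\)'' to topological non-amenability --- and the correct abstract remedy, namely a continuous action of the \emph{whole} open subgroup on a compact space admitting no invariant probability measure. But the concrete spaces you propose do not deliver this, and that is a genuine gap. The complementary component \(S'\) you single out is of infinite type, and for an infinite-type surface \(\mathcal{PML}(S')\) is not compact (nor does the classical Thurston theory of pseudo-Anosovs apply there), the curve graph has diameter \(2\) so its Gromov boundary is useless, and Dehn twists act trivially on the end space; so none of your candidate compact \(U\)-spaces exists as stated. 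The fix is to descend once more: choose a \emph{finite-type}, non-sporadic subsurface \(\Sigma\subset S'\) and work with \(\mathcal{PML}(\Sigma)\) (which is compact) or, as the paper does, with the discrete group \(\MCG(\Sigma,\partial\Sigma)\) directly.

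This introduces a second point you do not address: for the pointwise stabiliser \(\MCG(S)_{(F\cup\partial\Sigma)}\) to act on \(\mathcal{PML}(\Sigma)\) --- equivalently, to restrict to \(\Sigma\) --- every element of it must preserve \(\Sigma\) setwise, and fixing the boundary curves up to isotopy does not give this for free, since complementary components of a multicurve can be permuted. The paper's Lemma~\ref{lem_fin_col} arranges for \(\Sigma\) to be the unique finite-type complementary component, which forces invariance. With that in hand the paper concludes via a continuous \emph{epimorphism} \(\MCG(S)_{(F\cup\partial\Sigma)}\to\MCG(\Sigma,\partial\Sigma)\) onto a discrete non-amenable group together with the hereditary property \ref{HA2}, which is cleaner than building a compact space by hand; your ping-pong argument on \(\mathcal{PML}(\Sigma)\) using two independent pseudo-Anosovs of \(\Sigma\) would then also work, but only after these two repairs. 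Your handling of the ``in particular'' clause is fine.
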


\begin{remark}
Although Theorem~\ref{thm: big mcg} is stated and proved for orientable surfaces, the proof goes verbatim for non-orientable surfaces.
\end{remark}

This is a stronger result than \cite{long2025big} which states that the big mapping class groups are not extremely amenable.

We remark that, as mentioned in \cite{long2025big}, there is an alternative geometric proof of this result when \(S\) is an infinite-type surface with a non-displaceable subsurface of finite type. In this case, one can construct a continuous action of \(\MCG(S)\) on a separable geodesic \(\delta\)-hyperbolic space \(X\) by isometries such that the action is of general type \cite{domat2022big, horbez2022big}. Since a continuous isometric action of an amenable group on such a space cannot be of general type \cite[Theorem 9.1.1]{long2024diverse}, this implies that \(\MCG(S)\) cannot be amenable.

Although geometric group theory is most commonly applied to finitely
generated groups, Rosendal \cite{rosendal2021coarse} observed
that the finiteness or compactness condition can be relaxed by using a notion of boundedness for general topological groups, and it has been seen that this allows us to extend the tools of geometric group
theory to a larger class of groups, namely \emph{coarsely bounded generated}, or, for abbreviation, \emph{CB} generated Polish group. For example, Mann and Rafi \cite{mann2023large} characterise mapping class groups that are CB generated, and hence have a canonical (up to
quasi-isometry) word metric. Moreover, there are CB generated big mapping class groups that is Gromov-hyperbolic \cite{schaffer2024graphs}. In view of these, in a recent note \cite{chandran2021infinite}, Chandran, Patel and Vlamis proposed the following problem:
\begin{problem}
Develop the theory of hyperbolic Polish groups.
\end{problem}

This problem is also related to the Tits alternative: it is a classical result from geometric group theory that finitely generated hyperbolic groups satisfies the Tits alternative. In particular, the stabiliser of a point at infinity on the Gromov boundary of a finitely generated hyperbolic group is amenable, see for example \cite[Chapitre 8]{ghys2013groupes}. However, this is not true in general for CB generated hyperbolic Polish group:

\begin{theorem}\label{thm: CB hyp}
There exists a CB generated hyperbolic Polish group such that there is a point on its Gromov boundary with a non-amenable stabiliser in it.
\end{theorem}
The example is given by a big mapping class group and Theorem~\ref{thm: CB hyp} is a consequence of Theorem~\ref{thm: big mcg}.

Outside of the surface world, a graph analogue of big mapping class groups is the mapping class group of a locally finite infinite graph. It can also be considered as the generalisation of the mapping class group of a finite graph, which is a discrete group isomorphic to \(\Out(\mathbb{F}_n)\), if the fundamental group of the finite graph is \(\mathbb{F}_n\,\). We note that similarly to mapping class groups of finite-type surfaces, \(\Out(\mathbb{F}_n)\) also satisfies the Tits alternative \cite{bestvina2000tits,bestvina2005tits}. Recently, Domat, Hoganson and Kwak have shown that the pure mapping class groups of finite type graphs also enjoy the Tits alternative, but not those of other graphs \cite{domat2023generating}. As a consequence, the Tits alternative also fails in general for the mapping class group of a locally finite infinite graph. So here, we ask about the amenability of these groups and of their closed subgroups.

The situation is much different from the case of big mapping class groups: here there are both amenable and non-amenable situations. We first prove:

\begin{theorem}\label{thm: big out fn}
Let \(\Gamma\) be a locally finite infinite graph of genus at least \(2\). Then \(\MCG(\Gamma)\) is not amenable. Moreover, if \(\Gamma\) is of infinite type, then every open subgroup of \(\MCG(\Gamma)\) is not amenable.
\end{theorem}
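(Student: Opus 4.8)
The plan is to reduce the statement, for every open subgroup, to producing a single \emph{topologically robust} non-amenability witness that can be placed away from any prescribed finite subgraph. First I would record that \(\MCG(\Gamma)\) is a non-archimedean Polish group whose identity neighbourhood basis consists of the pointwise stabilizers \(\mathrm{Stab}(K)\) of finite subgraphs \(K\subset\Gamma\); thus every open subgroup \(U\) contains some \(\mathrm{Stab}(K)\), and the generators of the witness I build will be supported on \(\Gamma\setminus K\), so that they already lie in \(U\). The genus hypothesis (and, for the open-subgroup statement, infinite type, ensuring that the complement of any finite subgraph still carries genus at least \(2\)) is exactly what guarantees the room needed to do this.

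The crucial point — and the reason the naive argument fails — is that exhibiting a discrete non-abelian free subgroup is \emph{not} sufficient: the symmetric group \(S_\infty\) is an amenable Polish group containing \(\mathbb{F}_2\) as a discrete subgroup. Hence the witness must be genuinely continuous and equivariant. Following the geometric route indicated for surfaces in the introduction, I would construct a continuous isometric action of \(\MCG(\Gamma)\) on a separable geodesic \(\delta\)-hyperbolic space \(X\), the natural candidate being the graph analogue of the curve complex — a free-factor or free-splitting type complex attached to \(\Gamma\), whose hyperbolicity mirrors that used for \(\Out(\mathbb{F}_n)\) in \cite{bestvina2000tits, bestvina2005tits} and for big mapping class groups in \cite{domat2022big, horbez2022big}. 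Using two rank-\(2\) pieces of \(\Gamma\setminus K\), I would produce by a ping-pong argument two independent loxodromic isometries with no shared endpoints at infinity, making the \(\mathrm{Stab}(K)\)-action \emph{of general type}. Since a continuous isometric action of an amenable topological group on such a space cannot be of general type \cite[Theorem 9.1.1]{long2024diverse}, this forces \(\mathrm{Stab}(K)\), and hence \(U\), to be non-amenable; taking \(K=\varnothing\) handles \(\MCG(\Gamma)\) itself under the weaker genus hypothesis.

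The main obstacle I expect is concentrated in the construction of \(X\) rather than in the soft amenability input, and it is twofold. First, one must attach to the infinite graph \(\Gamma\) a complex that is simultaneously \emph{separable}, \emph{\(\delta\)-hyperbolic}, and carries a \emph{continuous} \(\MCG(\Gamma)\)-action; separability is delicate because infinite-type objects tend to produce non-separable complexes, so I would model \(X\) on a cofinal exhaustion of \(\Gamma\) by finite-rank subgraphs, pass to a suitable limit, and check continuity against the permutation topology. Second, and most importantly, I must verify that the loxodromics can be built \emph{locally}, supported off an arbitrary finite \(K\), so that the general-type conclusion survives passage to every open subgroup; this is precisely where the genus-\(\ge 2\) and infinite-type hypotheses enter quantitatively, and where — just as in the surface case, whose general proof does not reduce to the hyperbolic action — a purely geometric construction may have to be supplemented by a more robust argument, for instance a fixed-point-free action on the space of probability measures over a Cantor set of ends, in order to cover graphs admitting no non-displaceable finite-rank subgraph.
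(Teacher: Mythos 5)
Your diagnosis of why a naive free-subgroup argument fails is correct, but the route you then choose leaves the essential work undone. The entire burden of your proof rests on constructing a separable geodesic \(\delta\)-hyperbolic space \(X\) with a continuous \(\MCG(\Gamma)\)-action of general type whose independent loxodromics can be supported off an arbitrary finite subgraph \(K\). You do not construct \(X\); you only list the properties it would need, and you concede at the end that the geometric construction ``may have to be supplemented by a more robust argument'' for graphs admitting no non-displaceable finite-rank subgraph. That concession is fatal: the introduction of the paper already notes that the analogous hyperbolic-action argument for surfaces only covers the non-displaceable case, and no complex with all the required properties (separability, hyperbolicity, continuity of the action, locally supported independent loxodromics) is established for locally finite infinite graphs in the sources you invoke. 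As written, the proposal is a research programme rather than a proof, with the hardest step left as a black box.

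The gap is the more striking because a much softer argument closes it, and it is the one the paper uses. Since \(\Gamma\) has rank at least \(2\), choose a finite non-tree subgraph \(K'\) of rank \(n\geq 2\) (placed in the complement of the prescribed finite subgraph \(K_0\) when \(\Gamma\) is of infinite type, so that the resulting basic open subgroup sits inside the given open subgroup \(G\)), and a finite subgraph \(K\) isolating \(K'\) so that every element of the basic open subgroup \(U_K\) preserves \(K'\). Restriction then gives a continuous surjection \(U_K\to\MCG(K')\simeq\Out(\mathbb{F}_n)\) onto a \emph{discrete} group; surjectivity holds because \(\MCG(K')\) includes back into \(U_K\) by extending by the identity. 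For a discrete target, containing \(\mathbb{F}_2\) genuinely implies non-amenability, and by \ref{HA2} a continuous homomorphism with dense image from an amenable group has amenable image --- so \(U_K\), hence any open subgroup containing it, is non-amenable. Your instinct that one cannot simply exhibit \(\mathbb{F}_2\) as a discrete subgroup of \(\MCG(\Gamma)\) is right, but the correct fix is to surject an open subgroup continuously onto a discrete non-amenable group, not to build a hyperbolic action.
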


The proofs of Theorem~\ref{thm: big mcg} and Theorem~\ref{thm: big out fn} have a similar flavour. The idea is to find an open subgroup that admits continuous epimorphism onto a non-amenable discrete group, respectively the mapping class group of a finite type surface and \(\Out(\mathbb{F}_n)\,\).

When the underlying graph is a tree, the mapping class group is isomorphic as topological groups to the homeomorphism group of its ends \cite[Corollary 4.12]{bestvina2025groups}.

Recall that the {\it setwise stabiliser of \(K\)} is a subgroup given by 
\[\MCG(T)_K\coloneqq\left\{g\in\MCG(T)\colon g(K)=K\right\},\]
for any subset \(K\subset E\,\). Using Cantor-Bendixson analysis, we can show the following result about the amenability of the mapping class group of locally finite infinite trees:

\begin{theorem}\label{thm: homeo end}
Let \(T\) be a locally finite infinite tree. If \(\partial T\) is countable, then \(\MCG(T)\) is amenable. If there is a clopen subset \(K\subset \partial T\) of which the setwise stabiliser \(\MCG(T)_K\) admits no invariant probability measure on \(K\), then \(\MCG(T)\) is not amenable.
\end{theorem}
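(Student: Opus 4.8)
The plan is to prove the two halves of Theorem~\ref{thm: homeo end} separately, using the identification $\MCG(T)\cong\homeo(\partial T)$ from \cite{bestvina2025groups} as the starting point, so that throughout I work with the homeomorphism group of the end space with its natural topology (uniform convergence, equivalently the topology of a Polish group acting on the compact metrizable space $\partial T$).

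For the first assertion, suppose $\partial T$ is countable. Since $\partial T$ is a compact, totally disconnected, countable metrizable space, I would invoke Cantor--Bendixson theory: the derived series $\partial T\supseteq (\partial T)'\supseteq (\partial T)''\supseteq\cdots$ terminates at a countable ordinal $\alpha$ (the Cantor--Bendixson rank) with $(\partial T)^{(\alpha)}=\emptyset$, and each successive difference consists of isolated points of the corresponding subspace. The key point is that this filtration is canonical, hence $\homeo(\partial T)$-invariant, so every homeomorphism permutes the points of each Cantor--Bendixson level and preserves the rank. I would then argue amenability by transfinite induction on $\alpha$. The base case is a finite discrete set, whose homeomorphism group is a finite symmetric group, trivially amenable. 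For the inductive step, the action on the (countable, discrete) top-level points $\partial T\setminus(\partial T)'$ gives a homomorphism to a permutation group of a countable set; the subtlety is that amenability must be assembled from the amenability of the kernel (fixing each level setwise and controlling the restriction to the next derived set) together with amenability of the image. Concretely, I expect to use that amenability of topological groups is preserved under extensions, under inverse limits, and under passing to closed subgroups, and that a topological group acting on a countable discrete set with amenable (open) point-stabilisers, all of whose finite-support permutation behaviour is amenable, is itself amenable. The clean way to package this is to observe that $\homeo(\partial T)$ is a closed subgroup of a countable inverse limit of groups built from symmetric groups of the finite level-sets, and that each such finite symmetric group is amenable; amenability then passes through the inverse limit.

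For the second assertion, suppose there is a clopen $K\subseteq\partial T$ whose setwise stabiliser $\MCG(T)_K$ admits no invariant probability measure on its action on $K$. I would argue by contraposition: if $\MCG(T)$ were amenable, then its closed subgroup $\MCG(T)_K$ would be amenable (closed subgroups of amenable topological groups are amenable, using that $K$ clopen makes the stabiliser closed, indeed open). By the definition of amenability, an amenable topological group acting continuously on a compact space always admits an invariant probability measure; since $K$ is clopen in the compact space $\partial T$ it is itself compact, and the $\MCG(T)_K$-action on $K$ is continuous. Hence $\MCG(T)_K$ would fix an invariant probability measure on $K$, contradicting the hypothesis. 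This direction is essentially formal once the topological-group amenability definitions and the clopen-implies-closed observation are in place.

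The main obstacle is the amenability proof in the countable case, specifically making the transfinite induction genuinely work at limit ordinals and correctly handling the interaction between the discrete permutation action on each Cantor--Bendixson level and the continuity of the ambient topology on $\homeo(\partial T)$. The danger is conflating amenability of the abstract discrete symmetric group on an infinite set (which can fail for the full symmetric group on a countable set only in the sense that $S_\infty$ is in fact amenable as a discrete group, but one must be careful) with amenability of the topological group $\homeo(\partial T)$; I would therefore lean on the inverse-limit description, where at each finite stage only finite symmetric groups appear, so that amenability is transparent and the limit ordinals are absorbed by the continuity of the inverse-limit topology rather than requiring a separate limit-step argument. Verifying that $\homeo(\partial T)$ really is (topologically isomorphic to) such an inverse limit of finite-symmetric-group data, canonically indexed by the Cantor--Bendixson stratification, is the technical heart of the argument.
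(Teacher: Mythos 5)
Your second half is correct and coincides with the paper's argument: the setwise stabiliser of a clopen set \(K\) equals \(V_{\mathcal P}\) for the partition \(\{K, E\setminus K\}\), hence is \emph{open}, and amenability passes to open subgroups, so the absence of an invariant measure on the compact set \(K\) kills amenability of the whole group. Do be careful with your stated justification, though: ``closed subgroups of amenable topological groups are amenable'' is false in general (the unitary group of a separable Hilbert space with the strong topology is amenable yet contains every countable group as a closed discrete subgroup); only the openness of the stabiliser saves this step, and you do note it.

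The first half contains a genuine gap. Your strategy (Cantor--Bendixson filtration plus transfinite induction) is the same as the paper's, but the device you propose to make it work --- realising \(\homeo(\partial T)\) as a closed subgroup of an inverse limit of \emph{finite} symmetric groups attached to the Cantor--Bendixson levels --- cannot succeed. First, the levels \(D_\alpha\) are in general countably infinite discrete sets (only the top level is finite; already for \(\partial T\cong\omega^2+1\) the first two levels are infinite), so there are no ``finite level-sets'' to take symmetric groups of. Second, a closed subgroup of an inverse limit of finite groups is profinite, hence compact; but \(\homeo(\partial T)\) is not compact in general (for \(\partial T\cong\omega+1\) it is the full symmetric group \(\mathfrak S_\infty\) of the isolated points with its Polish topology). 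Third, the reduction again leans on passing amenability to closed subgroups, which is invalid. What the paper does instead, and what you would need, is: (i) show that the group of homeomorphisms supported on a single level contains a \emph{dense} directed union of finite symmetric groups (those supported on finite subsets of the level), so it is amenable by the directed-union and dense-subgroup permanence properties \ref{HA3} and \ref{HA5} --- this is how one handles the infinite discrete levels without any compactness; (ii) identify the successor quotient \(G_{\alpha+1}/G_\alpha\) \emph{as a topological group} with the restriction image \(G^\sharp_\alpha\) acting on \(E^{(\alpha)}\) (Lemma~\ref{lemma : quotient}, which requires checking the quotient map is open) and apply the extension property \ref{HA4}; and (iii) handle limit ordinals by a separate directed-union argument, since \(G_\lambda=\bigcup_{\alpha<\lambda}G_\alpha\). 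The limit step is not ``absorbed by continuity of the inverse-limit topology''; it needs \ref{HA3} explicitly. As written, your inductive engine does not run.
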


Together with Proposition~\ref{prop : 1 loop}, we can conclude from Theorem~\ref{thm: homeo end} the following result:
\begin{corollary}
Let \(\Gamma\) be a locally finite infinite graph with \(\rk(\Gamma)\leq 1\). If in addition \(\partial \Gamma\) is countable, then \(\MCG(\Gamma)\) is amenable.
\end{corollary}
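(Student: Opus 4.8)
The plan is to reduce the statement to Theorem~\ref{thm: homeo end} according to the two possible values of \(\rk(\Gamma)\).

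If \(\rk(\Gamma)=0\), then \(\Gamma\) is a locally finite infinite tree and \(\partial\Gamma\) is by hypothesis countable, so Theorem~\ref{thm: homeo end} applies verbatim and \(\MCG(\Gamma)\) is amenable. The only case requiring work is therefore \(\rk(\Gamma)=1\). For this case I would invoke Proposition~\ref{prop : 1 loop}, which records the structural comparison between the mapping class group of a rank-one graph and that of an associated tree \(T\) obtained by cutting the unique essential loop of \(\Gamma\). Concretely, I expect this comparison to exhibit \(\MCG(\Gamma)\) as a (topological) extension
\[
1 \longrightarrow K \longrightarrow \MCG(\Gamma) \longrightarrow \MCG(T) \longrightarrow 1,
\]
in which the kernel \(K\) is amenable; one expects \(K\) to be abelian, generated by the twisting supported on the single loop.

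Before applying the homeomorphism-of-ends theorem to the quotient, I would check that \(T\) is again a locally finite infinite tree with \(\partial T\) countable. Cutting a single loop alters the end space of \(\Gamma\) only in a controlled way — splitting at most one end into finitely many pieces — so countability of \(\partial\Gamma\) passes to \(\partial T\). By Theorem~\ref{thm: homeo end}, the group \(\MCG(T)\) is then amenable.

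To finish, I would combine this with the permanence properties of amenability for topological groups that are relevant here: continuous quotients of amenable groups are amenable, and an extension of an amenable group by an amenable closed normal subgroup is again amenable. Applying the latter to the displayed sequence, with \(K\) amenable and \(\MCG(T)\) amenable, yields that \(\MCG(\Gamma)\) is amenable, which completes both cases. I expect the main obstacle to lie not in this amenability bookkeeping, which is formal and avoids any appeal to the (delicate) behaviour of amenability under passage to closed subgroups, but rather in extracting from Proposition~\ref{prop : 1 loop} the precise structural input above: canonically identifying the associated tree \(T\), verifying that the comparison map is a continuous open surjection with amenable kernel, and confirming that the end space remains countable under the cutting operation. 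Once this structural step is secured, the extension argument is routine.
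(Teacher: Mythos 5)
Your proposal is correct and follows essentially the same route as the paper: split on the rank, apply Theorem~\ref{thm: homeo end} directly in the tree case, and in the rank-one case use Proposition~\ref{prop : 1 loop}, whose proof in the paper is exactly the extension argument you sketch, with kernel \(\PMCG(\Gamma)\simeq\mathcal{R}\rtimes\PMCG(\Gamma_g^\ast)\) (abelian-by-finite rather than abelian, but still amenable) and quotient \(\homeo(\partial T)\simeq\MCG(T)\). The only minor simplification available is that \(\partial T\) is canonically homeomorphic to \(\partial\Gamma\) since the core is compact, so countability passes over with no splitting of ends to track.
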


We note that the latter case in Theorem~\ref{thm: homeo end} is abundant among the situations where \(E\) is uncountable, see Proposition~\ref{prop : many non-amen}. But this does not give a characterisation of the amenable mapping class groups of trees, nor of the rank-one graphs.

Since the end space of a tree is homeomorphic to a closed subset of the Cantor set, an equivalent way to state Theorem~\ref{thm: homeo end} without using the language of graph is the following:

\begin{theorem}
Let \(E\) be a closed subset of the Cantor set. Then \(\homeo(E)\) is amenable if \(E\) is countable. If there exists a clopen subset \(K\subset E\) of which the setwise stabiliser \(\homeo(E)_K\) admits no invariant probability measure on \(K\), then \(\homeo(E)\) is not amenable.\qeda
\end{theorem}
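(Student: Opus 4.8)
The plan is to deduce the statement directly from Theorem~\ref{thm: homeo end} through the standard dictionary between closed subsets of the Cantor set and end spaces of locally finite trees, using the topological group isomorphism $\MCG(T)\cong\homeo(\partial T)$ of \cite[Corollary 4.12]{bestvina2025groups}. The one new ingredient is the converse to the observation preceding the theorem: every closed subset $E$ of the Cantor set arises as $\partial T$ for some locally finite infinite tree $T$. To build such a $T$, I would fix a decreasing sequence of finite clopen partitions $\mathcal P_0\succeq\mathcal P_1\succeq\cdots$ of $E$ with mesh tending to $0$ and separating points; these exist because $E$ is compact and zero-dimensional. Taking the nonempty members of $\mathcal P_n$ as the level-$n$ vertices, the root as $E$ itself, and joining each piece of $\mathcal P_{n+1}$ to the piece of $\mathcal P_n$ containing it, one obtains a locally finite tree (finiteness of each partition bounds the degrees) whose ends, i.e.\ nested sequences of pieces, are in canonical bijection with the points of $E$.

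First I would check that this bijection is a homeomorphism $\partial T\cong E$: the basic clopen set of ends passing through a level-$n$ vertex $v$ corresponds exactly to the clopen piece $v\in\mathcal P_n$, and these pieces form a neighbourhood basis of $E$. Feeding this into \cite[Corollary 4.12]{bestvina2025groups} gives a topological group isomorphism $\homeo(E)\cong\homeo(\partial T)\cong\MCG(T)$, so that Theorem~\ref{thm: homeo end} applies.

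It then remains to transport the hypotheses. Countability of $E$ is the same as countability of $\partial T$, so the first clause of Theorem~\ref{thm: homeo end} yields amenability of $\homeo(E)$ whenever $E$ is countable. For the second clause, any homeomorphism $\phi\colon\partial T\to E$ induces the isomorphism $g\mapsto\phi g\phi^{-1}$, which carries clopen sets to clopen sets, conjugates setwise stabilizers $\MCG(T)_{\tilde K}\cong\homeo(E)_{\phi(\tilde K)}$, and, via push-forward $\phi_*$, gives an affine isomorphism between spaces of probability measures sending invariant measures to invariant measures. Hence a clopen $K\subseteq E$ whose stabilizer admits no invariant probability measure on $K$ corresponds to the same configuration in $\partial T$, and the second clause of Theorem~\ref{thm: homeo end} gives non-amenability of $\homeo(E)$.

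The main obstacle I expect is the realization step: verifying local finiteness and that the end map is a homeomorphism, while handling the edge cases cleanly. Isolated points of $E$ must be arranged to correspond to eventually-unbranched rays (which happens automatically once a piece becomes a singleton and can no longer split), and the finite and empty cases of $E$ should be discussed separately so that $T$ is genuinely an infinite locally finite tree (for $E=\emptyset$ the group $\homeo(E)$ is trivial and the first clause applies vacuously). Once the homeomorphism $\partial T\cong E$ is secured, the transport of the clopen-stabilizer and invariant-measure conditions is routine, as it only uses that a topological conjugacy of actions preserves clopen sets, stabilizers, and the induced affine structure on probability measures.
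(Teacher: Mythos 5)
Your proposal is correct. The paper itself gives no separate proof of this statement (it is marked as an immediate equivalent of Theorem~\ref{thm: homeo end}), and your route --- realizing an arbitrary closed subset \(E\) of the Cantor set as \(\partial T\) for a locally finite tree \(T\) built from a refining sequence of finite clopen partitions, then transporting the hypotheses through the topological isomorphism \(\MCG(T)\cong\homeo(\partial T)\) --- is a legitimate and carefully handled way to justify that equivalence, including the edge cases \(E=\emptyset\) and \(E\) finite. The only remark worth making is that the detour through trees is not strictly necessary: the paper's proof of Theorem~\ref{thm: homeo end} in \S\ref{s5} (the Cantor--Bendixson induction establishing Proposition~\ref{prop : g_a}, and the open-subgroup argument for the non-amenable clause) is carried out entirely in terms of \(\homeo(E)\) for \(E\) a closed subset of the Cantor set and never uses that \(E\) is an end space, so it proves the present statement verbatim; your realization lemma instead establishes the converse direction of the dictionary, which is what makes the two formulations genuinely equivalent rather than one merely implying the other.
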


Finally, let us remark that both mapping class groups of infinite-type surfaces and mapping class groups of locally finite infinite graphs are non-Archemidean Polish groups (see \cite[Proposition2.1]{long2025big} and \cite[Proposition 4.7]{bestvina2025groups}). A complete characterisation of the amenability of these groups, as well as their closed subgroups, in terms of the Ramsey property is given in \cite{moore2013amenability}.

\subsection*{Acknowledgement.} The author is grateful to Federica Fanoni for encouraging in writing this paper. The author wants to thank Bruno Duchesne, Federica Fanoni, Matteo Tarocchi and Brian Udall for inspiring discussion and helpful comments. The author also thanks other feedback received on the Blooming Beasts conference and from the anonymous referees. The author is sponsored by the ANR grant Tremplin ERC-Starting Grant MAGIC (ANR-TERC-StG-0007).
\section{Preliminaries}\label{s2}

In this section, we will give the definition and several basic properties of amenable groups, as well as surfaces, graphs, and their mapping class groups.

\subsection{Amenability.}\label{s2a} Let \(G\) be a topological group. We say that \(G\) is {\it amenable} if every continuous action of \(G\) on a compact set admits a \(G\)-invariant probability measure.

This notion should be considered as an analytic or ergodic property of a topological group as it depends on the underlying group topology. For example, it is well known that being discrete, the non-abelian free group \(\mathbb{F}_2\) on \(2\) generators is not amenable, but it becomes amenable if it is equipped with the induced topology as a subgroup of the compact Lie group \(\mathrm{SO}(3)\,\). On the other hand, every group is amenable, if endowed with the trivial topology. Moreover, we remark that if a topological group is amenable, then every coarser topology will also render this group amenable.

However, in the rest of this paper, we will simply say that a group is amenable without explicitly mentioning its group topology if this topology is understood.

The class of amenable groups enjoys the following hereditary properties.
\begin{proposition}[Proposition 4.1, \cite{grigorchuk2017amenability}]
Let \(G\) be a topological group.
\begin{enumerate}[label=(A\arabic*), topsep=0pt, itemsep=-1ex, partopsep=1ex, parsep=1ex ]
    \item If \(G\) is amenable, then very open subgroup \(H<G\) is amenable. \label{HA1}
    \item If \(H\to G\) is a continuous homomorphism with dense image and if \(H\) is amenable, then \(G\) is amenable.\label{HA2}
    \item If \(G\) admits a dense subgroup that is a directed union of amenable subgroups, then \(G\) is amenable.\label{HA3}
    \item If \(G\) has an amenable closed normal subgroup \(N\) such that the quotient \(G/N\) is amenable, then \(G\) is amenable.\label{HA4}
    \item If \(G\) is amenable if and only if any dense subgroup \(N\) is amenable with respect to the induced topology.\label{HA5}
\end{enumerate}
\end{proposition}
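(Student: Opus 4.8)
The plan is to treat all five properties within one framework. For a continuous action of a topological group on a nonempty compact Hausdorff space $K$, let $P(K)$ denote the set of Radon probability measures on $K$, viewed inside the dual $C(K)^*$ with the weak-$*$ topology; by the Riesz representation theorem and Banach--Alaoglu, $P(K)$ is a nonempty compact convex set on which the induced map $g\cdot\mu = g_*\mu$ acts by affine homeomorphisms. The continuity fact I would use repeatedly is that this action is jointly continuous: for $f\in C(K)$ the map $(g,\mu)\mapsto \int_K f\,d(g_*\mu) = \int_K f(g^{-1}x)\,d\mu(x)$ is continuous, by continuity of the action on $K$ and dominated convergence. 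In this language, amenability of $G$ says precisely that every such $P(K)$ has a $G$-fixed point, and (equivalently, via Day's fixed-point theorem) that every continuous affine $G$-action on a nonempty compact convex set has a fixed point.

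I would prove (A2) first, as it drives (A3) and (A5). Given a continuous $G$-action on compact $K$, pull it back along $\phi\colon H\to G$ to a continuous $H$-action; amenability of $H$ yields an $H$-invariant $\mu\in P(K)$. The stabiliser $\{g\in G : g_*\mu=\mu\}$ is a subgroup, closed by the continuity of $G\curvearrowright P(K)$ just noted, and it contains the dense image $\phi(H)$, hence equals $G$; so $\mu$ is $G$-invariant. Property (A3) then reduces to showing that a directed union $D=\bigcup_i H_i$ of amenable subgroups is amenable, after which the dense inclusion $D\hookrightarrow G$ and (A2) conclude: for $D\curvearrowright K$, each set $M_i\subseteq P(K)$ of $H_i$-invariant measures is nonempty (amenability of $H_i$) and closed, hence compact, and $\{M_i\}$ has the finite-intersection property by directedness, so compactness of $P(K)$ yields a common invariant measure, i.e. a $D$-invariant one.

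For (A1) the plan is coinduction. Since $H$ is open, $G/H$ is discrete; form $X=\{f\colon G\to K : f(gh)=h^{-1}\cdot f(g)\ \forall g\in G,\, h\in H\}$, a closed and hence compact subspace of $K^G$ (Tychonoff), nonempty by choosing a section of $G\to G/H$, and let $G$ act by $(g'\cdot f)(g)=f(g'^{-1}g)$. The evaluation $\mathrm{ev}_e\colon f\mapsto f(e)$ is continuous and $H$-equivariant, so an invariant measure for $G\curvearrowright X$ (amenability of $G$) pushes forward under $\mathrm{ev}_e$ to an $H$-invariant measure on $K$. For (A4) I would instead push $N$-invariant measures forward: $M=\{N\text{-invariant }\mu\in P(K)\}$ is nonempty (amenability of $N$), compact and convex, and normality of $N$ makes $G$ preserve $M$; since every element of $M$ is $N$-invariant, $N$ acts trivially, so the $G$-action on $M$ factors through a continuous $G/N$-action. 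Amenability of $G/N$ gives a $G/N$-invariant $\lambda\in P(M)$, whose barycenter $\mu=\int_M \nu\,d\lambda(\nu)\in P(K)$ is $G$-invariant by equivariance of the barycenter map and invariance of $\lambda$.

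For (A5) the backward implication is immediate from (A2) applied to the dense inclusion $N\hookrightarrow G$. For the forward implication I would pass to the characterisation of amenability by a left-invariant mean on the space $\mathrm{RUCB}(G)$ of bounded right-uniformly continuous functions, equivalent to the definition used here. Because $N$ is dense, restriction gives an isometric, left-$N$-equivariant isomorphism $\mathrm{RUCB}(G)\xrightarrow{\ \sim\ }\mathrm{RUCB}(N)$, each bounded uniformly continuous function on $N$ extending uniquely to $\overline{N}=G$; transporting a left-invariant mean across this isomorphism shows $N$ amenable, and conversely (again because the set of elements preserving a mean is closed and contains the dense $N$) amenability of $N$ forces that of $G$. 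I expect the two genuinely topological points to be the main obstacles: verifying continuity of the coinduced $G$-action in (A1), which is the only place the openness of $H$ is essential (openness reduces the variation of $g'^{-1}g_0$ to a variation inside a single coset $y_0H$ with $h\to e$, where joint continuity of $H\times K\to K$ applies), and, in (A5), establishing the invariant-mean characterisation and combining it with the uniform-continuity extension across the dense subgroup. The remaining steps are formal once continuity of $G\curvearrowright P(K)$ and compactness of $P(K)$ are in hand.
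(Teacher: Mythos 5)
You are comparing against a proof the paper does not actually contain: this proposition is quoted with a citation to \cite{grigorchuk2017amenability} and never proved in the text, so the only meaningful comparison is with the standard arguments in that survey. Your reconstruction is correct and covers all five items, but it is organised differently from the cited source: Grigorchuk--de la Harpe work largely through left-invariant means on \(\mathrm{RUCB}(G)\), whereas you stay almost entirely on the ``invariant measures on compacta'' side. Concretely: \ref{HA2} via closedness of the stabiliser \(\{g\in G: g_*\mu=\mu\}\) under the continuous action on \(P(K)\); \ref{HA3} via the finite-intersection property of the compact sets \(M_i\) of \(H_i\)-invariant measures, which neatly sidesteps any discussion of the inductive-limit topology on the directed union; \ref{HA1} via coinduction, where you correctly isolate that openness of \(H\) is exactly what makes the translation action on \(X\subset K^G\) jointly continuous (the perturbation \(h=g_0^{-1}g_0'g'^{-1}g_0\) eventually lies in \(H\) and acts through the given jointly continuous \(H\)-action on \(K\), so each coordinate of the action is controlled by one coordinate of \(f\) and a small element of \(H\)); and \ref{HA4} via the compact convex set of \(N\)-invariant measures, the induced \(G/N\)-action (continuous because \(G\to G/N\) is open), and a barycentre. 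This measure-theoretic route is arguably more self-contained for the paper's definition of amenability than the mean-based treatment, at the cost of invoking Tychonoff, Riesz--Alaoglu and barycentre equivariance.

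Two caveats, neither fatal. First, your justification of continuity of \(G\curvearrowright P(K)\) by ``dominated convergence'' is the wrong tool for nets; the correct argument uses compactness of \(K\) to get equicontinuity, i.e. \(\sup_{x\in K}|f(g^{-1}x)-f(g_0^{-1}x)|\to 0\) as \(g\to g_0\), after which weak-\(*\) continuity in \(\mu\) and the uniform bound \(\|\mu\|\le 1\) give joint continuity. Second, the forward direction of \ref{HA5} rests on the equivalence between the paper's definition (invariant measures on compacta) and the existence of a left-invariant mean on \(\mathrm{RUCB}(G)\); this is a genuine classical theorem (Rickert/Day, and it is where \cite{grigorchuk2017amenability} begins), not a formality, so as written \ref{HA5} depends on an unproved input --- which you flag honestly, and which is precisely the background of the cited Proposition 4.1. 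Given that, the dense-extension argument \(\mathrm{RUCB}(G)\xrightarrow{\sim}\mathrm{RUCB}(N)\) (uniformly continuous bounded real-valued functions extend uniquely from a dense subgroup, with the extension intertwining left translations by elements of \(N\)) is the standard and correct way to finish.
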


Here we say that a topological group \(G\) is the {\it directed union} of a family \((H_\alpha)_{\alpha\in A}\) of subgroups in \(G\) if \(G=\bigcup_{\alpha\in A}H_\alpha\) is equipped with the inductive limit topology (with respect to inclusion) and if for any \(\alpha,\beta\in A\), there exists \(\lambda\in A\) such that \(H_\lambda\supset H_\alpha\cup H_\beta\,\).

We should remind the reader that, unlike the discrete cases, containing a free group \(\mathbb{F}_2\) does not imply that the concerned group is not amenable, even if it is a discrete subgroup. More generally, the openness condition in \ref{HA1} cannot be removed. An example is the unitary group \(\mathrm{U}(\mathcal{H}_\mathbb{C}^\infty)\) of an infinite dimensional separable complex Hilbert space \(\mathcal{H}_\mathbb{C}^\infty\,\). Equipped with the strong operator topology, the group \(\mathrm{U}(\mathcal{H}_\mathbb{C}^\infty)\) is amenable \cite[Proposition 5.2]{grigorchuk2017amenability} but contains every countable group as a discrete subgroup \cite[Lemma 5.1]{grigorchuk2017amenability}.

\subsection{Mapping class group of surfaces.}\label{s2b} Throughout this paper, we assume that the surfaces are orientable, connected, and without boundary components. Let \(S\) be such a surface. We say that \(S\) is of {\it finite type}, if the fundamental group \(\pi_1(S)\) is finitely generated, otherwise \(S\) will be of {\it infinite type}.

Another equivalent perspective to understand the difference between finite-type and infinite-type surfaces is to consider the space of {\it ends} and the cardinal of the genus.

The notion of end space was introduced by \cite{freudenthal1931enden} under the general setting of connected locally compact spaces \(X\,\). It is the space given by
\[\varprojlim_{K\subset X} \pi_0\big(X\setminus K\big)\,,\]
carrying the inverse limit topology, where the limit runs over all (connected) compact subsets \(K\subset X\,\). Thus, a surface is of infinite-type if and only if it has infinite genus or infinitely many ends.

The mapping class group of a surface \(S\), denoted by \(\MCG(S)\), is the quotient group of orientation-preserving homeomorphisms on \(S\) by homotopy, namely
\[\MCG(S)\coloneqq \homeo^+(S)/\homeo_0(S)\,,\]
where \(\homeo_0(S)\) is the connected component of \(\Id \in \homeo^+(S)\,\). If the surface \(\Sigma\) has a boundary component, then we will write \(\MCG(\Sigma,\partial \Sigma)\) for the subgroup of the mapping classes of orientation-preserving homeomorphisms fixing \(\partial \Sigma\) pointwise.

The topological group \(\homeo^+(S)\) has the compact-open topology, inducing the quotient topology on \(\MCG(S)\,\). One can easily see that the mapping class group \(\MCG(S)\) of a finite-type surface \(S\) is discrete. But when the surface \(S\) becomes infinite-type, the topology changes dramatically: it is not locally compact nor compactly generated \cite[Theorem 4.2]{aramayona2020big}. 

To be precise, the mapping class group \(\MCG(S)\) of an infinite-type surface \(S\) is actually isomorphic as topological group to a closed subgroup of the symmetry group \(\mathfrak{S}_\infty\) of countably infinite set, equipped with the pointwise convergence topology \cite[Proposition 2.1]{long2025big}. In particular, \cite{hernandez2018isomorphisms,bavard2020isomorphisms} implies that the topology on \( \MCG(S) \) is the permutation topology induced by its action on the set of isotopy classes of curves on \( S \), {\it i.e.} the topology is generated by the pointwise stabilisers of finitely many curves. This makes the topological group \(\MCG(S)\) a separable and completely metrisable space, {\it i.e.} a Polish group.

\subsection{Mapping class group of graphs.}\label{s2c} Let \(\Gamma\) be a locally finite, infinite, connected graph. The mapping class group of \(\Gamma\) can be thought of as a graph analogue to the mapping class group of a surface, whereas the definition is slightly different.

Recall that \(f\colon X \to Y\) is a {\it proper homotopy equivalence} if it is proper and there is another proper map \(g\colon Y \to X\) such that both \(f\circ g\) and \(g\circ f\) are properly homotopic to the identity. The mapping class group of \(\Gamma\), denoted by \(\MCG(\Gamma)\), is the group of proper homotopy equivalences of \(\Gamma\) up to proper homotopy. We remark that this group should not be confused with the group of proper homotopy equivalences of \(\Gamma\), which is a proper extension of the mapping class group, nor should it be confused with the group of \emph{homotopy equivalences} which does not require the properness condition, see \cite{bestvina2025groups} for further details.

Similarly to the mapping class groups of infinite-type surfaces, the group \(\MCG(\Gamma)\) is also equipped with a Polish group topology. This group topology is obtained by the quotient of compact-open topology of inverse pairs of proper homotopy equivalences by proper homotopy. See \cite[\S 4]{bestvina2025groups} for more details.

Since \(\Gamma\) is connected and locally compact, we can also define the end space of \(\Gamma\), denoted by \(\partial \Gamma\,\). We say that \(\Gamma\) is of {\it infinite type} if \(\pi_1(\Gamma)\) is not finitely generated and the end space is not finite, otherwise it is of {\it finite type}.

The mapping class group of the graph \(\Gamma\) also acts continuously on the end space \(\partial(\Gamma)\) and thus gives rise to a homomorphism \(\MCG(\Gamma)\to\homeo(\partial(\Gamma))\) and we define the {\it pure mapping class group} \(\PMCG(\Gamma)\) to be the kernel of this homomorphism. For more details, see \cite{bestvina2025groups,domat2023coarse}.

If the graph is finite, then \(\MCG(\Gamma)\) is isomorphic to the discrete group \(\Out(\mathbb{F}_n)\), where \(\mathbb{F}_n\) is the free group on \(n\) generators and \(n\geq 0\) is the {\it rank} of the graph, {\it i.e.} an integer such that \(\pi_1(\Gamma)\simeq \mathbb{F}_n\,\). 

If \(\Gamma\) is a locally finite infinite graph with trivial fundamental group, then it becomes a locally finite tree. In this case, the mapping class group \(\MCG(\Gamma)\) will be isomorphic as topological groups to the homeomorphism group \(\homeo(\partial \Gamma)\) of the end space \(\partial \Gamma\), equipped with compact-open topology \cite[Corollary 4.12]{bestvina2025groups}.

It is a classical result that the end space \(\partial(\Gamma)\) is homeomorphic to a closed subset of the Cantor set, so the mapping class group of a tree is isomorphic to the homeomorphism group of a closed subset of the Cantor set.
\section{Mapping class groups of infinite-type surfaces}\label{s3}

Let \( S \) be an infinite-type surface, and let \( \MCG(S) \) denote its mapping class group. Throughout this text, we will not distinguish between a proper embedding of the unit circle \( S^1 \) into \( S \) and its isotopy class and both will be referred to as a {\it curve} on \( S \). For a collection \(F\) of curves on \(S\), we will write
\[\MCG(S)_{(F)}\coloneqq \{g\in \MCG(S)\colon g(\alpha)=\alpha \text{ for all }\alpha\in F\}\]
the {\it pointwise stabiliser} of \(F\,\).

We begin with the following foundational fact.

\begin{lemma}\label{lem_fin_col}
Let \( S \) be an infinite-type surface. Then there exists a finite collection of pairwise disjoint simple closed curves \( F = \{c_1, \ldots, c_n\} \) and a connected, non-sporadic, finite-type subsurface \( \Sigma \subset S \) such that \( \partial \Sigma = c_1 \cup \dots \cup c_n \), and for any \( g \in \MCG(S)_{(F)} \) in the pointwise stabilizer of \( F \), we have \( g(\Sigma) = \Sigma \,\).
\end{lemma}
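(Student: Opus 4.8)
The plan is to realise \(\Sigma\) as the \emph{unique} finite-type piece obtained by cutting \(S\) along a finite multicurve, and then to exploit the fact that a homeomorphism cannot change the topological type of a complementary region. Concretely, I would produce a finite disjoint union of essential simple closed curves \(F = c_1 \cup \cdots \cup c_n\) and a connected finite-type non-sporadic subsurface \(\Sigma\) with \(\partial \Sigma = F\) such that every connected component of \(S \setminus \Sigma\) is of infinite type. Granting this, the lemma follows quickly: any \(g \in \MCG(S)_{(F)}\) has a representative homeomorphism that can be isotoped to fix the \(1\)-submanifold \(F\) setwise, since the \(c_i\) are disjoint and each is individually fixed up to isotopy, hence \(g\) permutes the connected components of \(S \setminus F\). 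As \(g\) is a homeomorphism it preserves the topological type of each component; since \(\Sigma\) is the only finite-type component, we must have \(g(\Sigma) = \Sigma\).

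It remains to construct \(\Sigma\). First I would fix a compact, connected, non-sporadic subsurface \(\Sigma_0 \subset S\) whose boundary is a finite multicurve. Such a \(\Sigma_0\) exists precisely because \(S\) is of infinite type: if \(S\) has infinite genus, take \(\Sigma_0\) to contain two handles, so that it contains a copy of a genus-\(2\) surface with one boundary component; if \(S\) has finitely many handles but infinitely many ends, take \(\Sigma_0\) to separate off at least five distinct ends, so that it contains a copy of \(S_{0,5}\). In either case \(\Sigma_0\) is non-sporadic. Cutting \(S\) along \(\partial \Sigma_0\) produces \(\Sigma_0\) together with finitely many complementary components \(R_1, \dots, R_m\); at least one \(R_j\) is of infinite type, since otherwise \(S = \Sigma_0 \cup R_1 \cup \cdots \cup R_m\) would be of finite type. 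I would then enlarge \(\Sigma_0\) by absorbing all the finite-type \(R_j\), together with the curves separating them from \(\Sigma_0\), and call the result \(\Sigma\). This \(\Sigma\) is connected, contains \(\Sigma_0\) and hence is non-sporadic, is of finite type as a finite union of finite-type surfaces glued along finitely many curves, and by construction every remaining complementary component is of infinite type. Setting \(F = \partial \Sigma\) gives the required data.

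The routine verifications are that the absorbing step terminates, as there are only finitely many \(R_j\), and preserves connectedness and finite-typeness, and that an essential multicurve fixed componentwise up to isotopy can be realised by an honest invariant \(1\)-submanifold; these are standard change-of-coordinates facts on surfaces. The main conceptual point, and the step I would be most careful about, is guaranteeing the \emph{uniqueness} of the finite-type complementary piece, which is exactly what makes \(\Sigma\) canonical and forces \(g(\Sigma) = \Sigma\); the absorbing construction is designed precisely to secure this, by pushing every finite-type region into \(\Sigma\) so that no complementary region can be confused with it by a homeomorphism. I would also double-check that absorbed pieces carrying isolated ends of \(S\) cause no trouble: they simply become punctures of \(\Sigma\), which is still a legitimate finite-type subsurface with boundary \(F\).
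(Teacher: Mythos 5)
Your proposal is correct and follows essentially the same route as the paper: choose a connected, non-sporadic, finite-type subsurface all of whose complementary components are of infinite type, then observe that any element of the pointwise stabiliser of \(F=\partial\Sigma\) permutes the complementary components and must therefore fix the unique finite-type one. Your write-up merely spells out in more detail the "absorb the finite-type complementary pieces" step that the paper leaves implicit.
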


\begin{proof}
It suffices to take a connected, non-sporadic, finite-type essential subsurface \( \Sigma \subset S\) such that no complementary component of \(\Sigma\) is homeomorphic to itself and take \(F=\partial \Sigma\,\). This is always possible by choosing a subsurface without finite-type complementary component. Since any \( g \in \MCG(S)_{(F)} \) in the pointwise stabilizer of \( F \) permutes the complementary components \(S\setminus F\) and \(\Sigma\) is the only one of finite type, we can conclude \(g(\Sigma)=\Sigma\,\).
\end{proof}

Now we are able to show Theorem~\ref{thm: big mcg}. The idea is to use Lemma~\ref{lem_fin_col} to construct a non-amenable open subgroup in each open subgroup of \(\MCG(S)\,\).
\begin{proof}[Proof of Theorem~\ref{thm: big mcg}]
Let \( G < \MCG(S) \) be any open subgroup. By the definition of the permutation topology, there exists a finite collection \(F\) of curves on \(S\) such that \(\MCG(S)_{(F)}<G\,\). As the union of curves in \(F\) is compact, their complementary component contains an infinite-type subsurface \(S'\subset S\,\). By Lemma~\ref{lem_fin_col}, we can find a connected, non-sporadic, finite-type essential subsurface \(\Sigma\subset S'\) and another finite collection \(F'=\partial \Sigma\) of curves on \(S'\) such that \(g(\Sigma)=\Sigma\) for any \(g\in \MCG(S)_{(F\cup F')}<\MCG(S)_{(F')}\,\). This allows us to build a group homomorphism
\[\phi\colon \MCG(S)_{(F\cup F')}\to\MCG(\Sigma,\partial\Sigma)\]
by restricting a mapping class to \(\Sigma\,\). Using the definition of the compact-open topology and its quotient topology, it is not difficult to see that \(\phi\) is a continuous group homomorphism. Moreover, the group \(\MCG(\Sigma,\partial\Sigma)\) injects in \(\MCG(S)_{(F\cup F')}\) since \(F\cap \Sigma=\emptyset\), which allows us to deduce that \(\phi\) is an epimorphism. But \(\MCG(\Sigma,\partial \Sigma)\) is not amenable, in particular it is a discrete group containing a non-abelian free group of rank \(2\) \cite[Theorem 3.14]{farb2011primer}. By \ref{HA2}, the open subgroup \(\MCG(S)_{(F\cup F')}\) is not amenable. Now \ref{HA1} will yield that \(G\) itself is not amenable.
\end{proof}

As we have already seen, Theorem~\ref{thm: big mcg} can also be stated in the following way: all amenable subgroups in the big mapping class group have to be nowhere dense. However, this does not imply that such a group is discrete. For example, we can take the subgroup \(G<\MCG(S)\) generated by the Dehn twists along infinitely many pairwise disjoint curves. It is easy to see that \(G\) is isomorphic to \(\mathbb{Z}^\infty\) and is an abelian group, so it is amenable, but has an accumulation point at \(\Id\in\MCG(S)\), so it is not discrete.

\begin{proof}[Proof of Theorem~\ref{thm: CB hyp}]
Let \(S\coloneqq \mathbf{S}^2\setminus (\mathcal{C}\cup\{p\})\), where \(\mathbf{S}^2\) is the \(2\)-dimensional sphere, \(\mathcal{C}\) is a Cantor set and \(p\in \mathbf{S}^2\setminus \mathcal{C}\) is an isolated puncture. It has been already seen in \cite{schaffer2024graphs} that \(\MCG(S)\) is CB generated hyperbolic Polish group with infinite diameter and is quasi-isometric to the loop graph \(L(S;p)\). Consider a non-compact finite type subsurface \(V\subset S\) with \(p\) being an end of \(V\). Let \(\varphi\in \MCG(S)\) be such that \(\varphi|_V\) is pseudo-Anosov and \(\varphi|_{S\setminus V}\) is identity. Since \(\varphi\) is loxodromic on  \(L(S;p)\) by \cite[Theorem~1.1]{rasmussen2021wwpd}, it is also loxodromic on \(\MCG(S)\). Let \(\xi\) be a point on the Gromov boundary of \(\MCG(S)\) fixed by \(\varphi\). By Lemma~\ref{lem_fin_col}, we can find an open subgroup \(H<\MCG(S)\) fixing \(V\) pointwise. We can check that \(H\) commutes with \(\varphi\), so it is contained in the stabiliser of \(\xi\), \emph{i.e.} the stabiliser of \(\xi\) is open and thus not amenable by Theorem~\ref{thm: big mcg}.
\end{proof}
\section{Mapping class groups of infinite-type graphs}\label{s4}

Let \(\Gamma\) be a locally finite infinite-type graph, and let \(\MCG(\Gamma)\) be its mapping class group. Suppose now that \(\Gamma\) is not a tree. Recall that a homotopy \(H\colon C(X,X)\times [0,1]\to C(X,X)\) between two continuous maps \(u=H(u,0)\colon X\to X\) and \(v=H(u,1)\colon X\to X\) is {\it stationary on \(K\subset X\)} if the restriction \(H(u,t)|_K=\Id_K\) is the identity map on \(K\), for all \(t\in[0,1]\,\). Let \(K\subset \Gamma\) be a finite subgraph and define \(U_K\) to be the set of properly homotopy equivalence classes \([f]\in\MCG(\Gamma)\) with a representative such that
\begin{enumerate}[label=(\arabic*), topsep=0pt, itemsep=-1ex, partopsep=1ex, parsep=1ex ]
\item \(f=\Id\) on \(K\),\label{cond.1}
\item \(f\) preserves each complementary component of \(K\),\label{cond.2}
\item there is a representative \(g\) of \([f]^{-1}\in\MCG(\Gamma)\) verifying \ref{cond.1} and \ref{cond.2},
\item there are proper homotopies \(gf\simeq \Id\) and \(fg\simeq \Id\) that are stationary on \(K\) and preserve complementary components of \(K\,\).
\end{enumerate}
In view of \cite[Proposition 4.7]{bestvina2025groups}, the subgroups \(U_K\) are open in \(\MCG(\Gamma)\) and form a neighbourhood basis of \(\Id\in\MCG(\Gamma)\,\).

Similarly to the proof of Theorem~\ref{thm: big mcg}, we can construct some \(U_K\) that is non-amenable.
\begin{lemma}\label{lem: build subgraph}
Let \(\Gamma\) be a locally finite infinite-type graph with rank at least \(2\). Suppose that \(K\subset \Gamma\) is a finite subgraph so that only one complementary component \(K'\) is a finite subgraph of \(\Gamma\) but not a tree. Then \(U_K\) is not amenable.
\end{lemma}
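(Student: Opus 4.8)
The plan is to transcribe the proof of Theorem~\ref{thm: big mcg} into the graph setting: I will construct a continuous epimorphism from \(U_K\) onto a discrete group that is already known to be non-amenable, namely the mapping class group of the finite graph \(K'\) relative to its attaching locus, and then conclude by the heritability property \ref{HA2}. Since \(K'\) is finite (hence compact), its relative mapping class group is a discrete group of the type \(\Out(\mathbb{F}_r)\), which is the graph analogue of the group \(\MCG(\Sigma,\partial\Sigma)\) used in the surface argument.

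First I would set up the restriction homomorphism. Write \(\partial K' = K'\cap K\) for the finite set of vertices at which \(K'\) is attached to \(K\). Given \([f]\in U_K\), choose a representative satisfying \ref{cond.1} and \ref{cond.2}: it is the identity on \(K\) and preserves \(K'\) (as it preserves every complementary component of \(K\)). Because \(K'\) is compact, this representative restricts to a homotopy equivalence \(f|_{K'}\colon K'\to K'\) fixing \(\partial K'\) pointwise; conditions (3) and (4) supply an inverse together with homotopies that are stationary on \(\partial K'\), so \(f|_{K'}\) determines a well-defined class in the relative mapping class group \(\MCG(K',\partial K')\) of homotopy equivalences of \(K'\) rel \(\partial K'\). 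The resulting map \(\phi\colon U_K\to \MCG(K',\partial K')\), \([f]\mapsto[f|_{K'}]\), is a group homomorphism, and it is continuous since the topology on \(U_K\) is the quotient of the compact–open topology on inverse pairs of proper homotopy equivalences and \(K'\) is a compact piece of \(\Gamma\).

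Next I would verify surjectivity and non-amenability of the target. Given any class in \(\MCG(K',\partial K')\), I extend a representative by the identity on \(\Gamma\setminus K'\); the result is proper (being the identity off the compact set \(K'\)), fixes \(K\), preserves \(K'\) and every other complementary component, and its inverse and homotopies extend by the identity as well, so it represents an element of \(U_K\) mapping to the prescribed class. Here the uniqueness of \(K'\) among the finite non-tree complementary components is what makes this extension unambiguous and keeps it inside \(U_K\). Thus \(\phi\) is a continuous epimorphism. Finally, since \(K'\) is a finite graph that is not a tree, \(\pi_1(K')\cong\mathbb{F}_r\) is free of rank \(r=\rk(K')\ge 2\) (this is the point at which the rank hypothesis is used), and \(\MCG(K',\partial K')\) is a discrete group containing a copy of \(\Out(\mathbb{F}_r)\); by the Tits alternative for \(\Out(\mathbb{F}_r)\) \cite{bestvina2000tits,bestvina2005tits} it contains a non-abelian free group and is therefore not amenable. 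Applying \ref{HA2} to the continuous surjection \(\phi\) then shows that \(U_K\) cannot be amenable.

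The main obstacle I anticipate is the bookkeeping ensuring that \(\phi\) is genuinely well defined and surjective at the level of proper-homotopy classes: one must confirm that the four defining conditions of \(U_K\) restrict to, and extend from, the relative mapping class group of \(K'\) compatibly, in particular that homotopies stationary on \(K\) restrict to homotopies rel \(\partial K'\), and that extension by the identity respects condition (4). A secondary point to pin down is the exact identification of \(\MCG(K',\partial K')\) with a discrete group containing \(\Out(\mathbb{F}_r)\) for \(r\ge 2\), so that non-amenability of the target is unambiguous; once these are settled, the remainder is a direct transcription of the surface argument.
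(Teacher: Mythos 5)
Your proposal is correct and follows essentially the same route as the paper: restrict to the unique finite non-tree complementary component \(K'\), check the restriction is a continuous epimorphism onto a discrete mapping class group of \(K'\) (surjectivity via extension by the identity), observe that this target is non-amenable because it maps onto or contains free subgroups coming from \(\Out(\mathbb{F}_r)\) with \(r\ge 2\), and conclude by \ref{HA2}. The only cosmetic difference is that you target the relative group \(\MCG(K',\partial K')\) while the paper uses the absolute \(\MCG(K')\simeq\Out(\mathbb{F}_n)\) (and notes continuity simply from \(\ker\supset U_{K\cup K'}\) being open); neither difference affects the argument.
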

\begin{proof}
By the definition of \(U_K\), the restriction map to \(K'\) yields a group homomorphism
\[\psi\colon U_K\to \MCG(K')\]
and this homomorphism is continuous since \(\MCG(K')\) is discrete by definition and \(\ker(\psi)\supset U_{K\cup K'}\) is open. We note that the (properly) homotopy classes of homeomorphisms \(K'\to K'\) are also contained in \(U_K\), which gives us inclusion \(\MCG(K')\hookrightarrow U_{K}\,\). This implies that \(\psi\) is an epimorphism. However, there exists a natural homomorphism \(\MCG(K')\to \Out(\pi_1(K'))\simeq \Out(\pi_1(\mathbb{F}_n))\), where \(n\geq 2\) is the rank of \(K'\). But \(\Out(\mathbb{F}_n)\) contains non-abelian free subgroups (see, for example, \cite[\S 3.4]{bestvina2000tits}) as a discrete group, so it is not amenable. By \ref{HA2}, we can conclude that the topological group \(U_K\) is not amenable as it continuously maps onto \(\Out(\mathbb{F}_n)\) via \(\psi\,\).
\end{proof}

To show that \(\MCG(\Gamma)\) or its open subgroup \(G<\MCG(\Gamma)\) is not amenable, it suffices to find a finite subgraph \(K\subset\Gamma\) verifying the assumptions in Lemma~\ref{lem: build subgraph}. 

\begin{proof}[Proof of Theorem~\ref{thm: big out fn}]
Indeed, start with a finite subgraph \(K'\subset \Gamma\) of rank at least \(2\). Let \(V\) be a compact neighbourhood of \(K'\) such that no complementary component of \(V\) is compact. This \(V\) always exists since \(\Gamma\) is locally finite, the total number of complementary components of \(V\) is finite, and we can take a new \(V\) to be the union of \(V\) and all its compact complementary components, which will remain a compact neighbourhood of \(K'\,\). Let \(K\) be the finite graph \(V\setminus K'\,\). The \(K\) will be a finite subgraph in \(\Gamma\) satisfying the assumptions in Lemma~\ref{lem: build subgraph}. Thus, \(U_K\) is not amenable, forcing \(\MCG(\Gamma)\) to be non-amenable in view of \ref{HA1}.

Suppose now \(\Gamma\) is of infinite type. Let \(G\) be an arbitrary open subgroup of \(\MCG(\Gamma)\,\). By \cite[Proposition 4.7]{bestvina2025groups}, there exists a finite subgraph \(K_0\subset \Gamma\) such that \(U_{K_0}<G\,\). Now take \(K'\subset \Gamma\setminus K_0\) to be finite non-tree subgraph, which is always feasible since \(\Gamma\) is infinite-type. Similarly to the case above, we can construct a compact neighbourhood \(V\) of \(K'\cup K_0\) such that \(K\coloneqq V\setminus K'\) satisfies the assumptions of Lemma~\ref{lem: build subgraph}. Note that \(U_{K}<U_{K_0}<G\) as \(K_0\subset K\,\). Lemma~\ref{lem: build subgraph} implies that \(U_{K}\) is a non-amenable open subgroup in \(G\). Hence, we can conclude the non-amenability of \(G\) by appealing to \ref{HA1}.
\end{proof}

When the graph \(\Gamma\) is not infinite-type, the mapping class group \(\MCG(\Gamma)\) might contain amenable open subgroups. Recall that the {\it core} \(\Gamma_g\) of \(\Gamma\) is the smallest subgraph that has positive rank. Observe that if \(\Gamma\) is neither a tree nor of infinite type, then \(\Gamma_g\) is a finite subgraph of \(\Gamma\). In this case, \(U_{\Gamma_g}\) is an open subgroup of \(\MCG(\Gamma)\) and can be identified with the mapping class group of the tree \(\overline{\Gamma\setminus\Gamma_g}\subset\Gamma\,\). However, we will see in \S~\ref{s5} that the mapping class group of some trees is amenable.

Finally, we conclude that if the graph \(\Gamma\) has only one immersed loop, then the amenability of \(\MCG(\Gamma)\) is completely determined by that of the mapping class group of this tree, {\it i.e.} \(\MCG(\Gamma)\) is amenable if and only if \(\MCG(T)\) is.
\begin{proposition}\label{prop : 1 loop}
Let \(\Gamma\) be a locally finite infinite graph of rank \(1\). Let \(T\coloneqq \overline{\Gamma\setminus\Gamma_g}\subset \Gamma\,\). Then \(\MCG(\Gamma)\) is amenable if and only if \(\MCG(T)\) is.
\end{proposition}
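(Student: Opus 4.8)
The plan is to build everything on the identification, recorded just before the statement, of the open subgroup \(U_{\Gamma_g}<\MCG(\Gamma)\) with the mapping class group \(\MCG(T)\) of \(T=\overline{\Gamma\setminus\Gamma_g}\), together with one new ingredient: that \(U_{\Gamma_g}\) has \emph{finite index} in \(\MCG(\Gamma)\). Granting these two facts, both implications of the proposition drop out of the hereditary properties of amenability, so the real content is the index computation.

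First I would dispatch the forward direction. If \(\MCG(\Gamma)\) is amenable, then since \(U_{\Gamma_g}\) is an open subgroup, property \ref{HA1} gives that \(U_{\Gamma_g}\cong\MCG(T)\) is amenable. For the converse, suppose \(\MCG(T)\), hence \(U_{\Gamma_g}\), is amenable. Using \([\MCG(\Gamma):U_{\Gamma_g}]<\infty\), the normal core \(N\coloneqq\bigcap_{g\in\MCG(\Gamma)} gU_{\Gamma_g}g^{-1}\) is an intersection of only finitely many conjugates of \(U_{\Gamma_g}\), hence an open normal subgroup of finite index. As \(N<U_{\Gamma_g}\) is an open subgroup of an amenable group, \ref{HA1} shows \(N\) is amenable, while the quotient \(\MCG(\Gamma)/N\) is finite and therefore amenable; property \ref{HA4} (applied to the open, hence closed, normal subgroup \(N\)) then yields that \(\MCG(\Gamma)\) is amenable.

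The crux is thus the finite-index claim, and here the decisive input is that \(\Gamma_g\), being the core of a rank-one graph, is a \emph{finite} subgraph. Since \(\Gamma\) is locally finite, only finitely many edges leave \(\Gamma_g\), so \(\Gamma\setminus\Gamma_g\) has only finitely many complementary components \(T_1,\dots,T_n\). I would then argue that the coset space \(\MCG(\Gamma)/U_{\Gamma_g}\) injects into a finite set: any mapping class can only permute the finitely many \(T_i\) and act on the finite core up to homotopy, and since \(\pi_1(\Gamma)\cong\mathbb{Z}\) has \(\Out(\mathbb{Z})\cong\mathbb{Z}/2\), the homotopy classes of self-maps of \(\Gamma_g\) that occur form a finite set. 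Concretely, the coset of \(g\) should be determined by a pair consisting of a symmetry of the core together with an admissible permutation in \(S_n\) of the components; two classes inducing the same datum differ, after homotoping them to agree on \(\Gamma_g\) (using that \(\Gamma_g\hookrightarrow\Gamma\) is a cofibration), by a class that fixes \(\Gamma_g\) pointwise and preserves each complementary component, i.e.\ by an element of \(U_{\Gamma_g}\).

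The hard part will be this last verification — confirming that agreement on the finite combinatorial datum really places two mapping classes in the same \(U_{\Gamma_g}\)-coset. This forces one to match the homotopy-theoretic definition of \(U_{\Gamma_g}\), namely conditions \ref{cond.1}--\ref{cond.2} together with the stationarity of the proper homotopies, against the combinatorial bookkeeping, and in particular to produce the stationary homotopies rel \(\Gamma_g\) from a homotopy that a priori only fixes \(\Gamma_g\) up to homotopy. A secondary point to handle is that \(T=\overline{\Gamma\setminus\Gamma_g}\) need not be connected, so \(\MCG(T)\) should be read as the mapping class group of a forest, equivalently \(\homeo(\partial T)\), and one should check that the restriction-to-complementary-components map realises \(U_{\Gamma_g}\cong\MCG(T)\) as a topological isomorphism onto this group.
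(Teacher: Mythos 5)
Your forward implication is fine, but the converse rests entirely on the claim that \(U_{\Gamma_g}\) has finite index in \(\MCG(\Gamma)\), and that claim is false. The combinatorial datum you propose (a permutation of the complementary components \(T_1,\dots,T_n\) together with a symmetry of the finite core) does not determine the \(U_{\Gamma_g}\)-coset, because it misses the \emph{loop-dragging} classes: the proper homotopy equivalences that wind an end around the core loop. Already for the simplest example, a single loop with one ray attached, one has \(\MCG(\Gamma)\cong\mathbb{Z}\rtimes\mathbb{Z}/2\) (the \(\mathbb{Z}\) generated by dragging the end around the loop), while \(T\) is a ray and \(U_{\Gamma_g}\cong\MCG(T)\cong\homeo(\partial T)\) is trivial; so \(U_{\Gamma_g}\) has infinite index even though your finite datum takes only finitely many values. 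In general the drags form the subgroup \(\mathcal{R}\) of functions \(\partial\Gamma\to\mathbb{Z}\) appearing in \cite[Corollary 3.9]{bestvina2025groups}, and \(\mathcal{R}\cap U_{\Gamma_g}\) is trivial, so \(\mathcal{R}\) injects into the coset space. A second, independent obstruction: when \(n\geq 2\) and the \(\partial T_i\) are Cantor sets, condition~\ref{cond.2} forces the image of \(U_{\Gamma_g}\) in \(\homeo(\partial\Gamma)\) to lie in the setwise stabiliser of the clopen partition \(\{\partial T_i\}\), which already has countably infinite index in \(\homeo(\partial\Gamma)\). So no amount of care with the stationary homotopies will rescue the index computation.

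The paper takes a different route that avoids this entirely: it uses the exact sequence \(1\to\PMCG(\Gamma)\to\MCG(\Gamma)\to\homeo(\partial\Gamma)\to 1\) together with the identification \(\homeo(\partial\Gamma)=\homeo(\partial T)\simeq\MCG(T)\) (note \(\partial T=\partial\Gamma\) since the core is compact), rather than realising \(\MCG(T)\) as the subgroup \(U_{\Gamma_g}\). The kernel \(\PMCG(\Gamma)\simeq\mathcal{R}\rtimes\PMCG(\Gamma_g^\ast)\) is an extension of \(\mathbb{Z}_2\) by an abelian group, hence solvable and amenable in any group topology; then \ref{HA4} gives amenability of \(\MCG(\Gamma)\) from that of \(\MCG(T)\), and \ref{HA2} applied to the quotient map gives the converse. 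If you want to keep your subgroup-based setup, you would have to replace ``finite index'' by an argument that the closed normal subgroup measuring the failure of \(U_{\Gamma_g}\) to be all of \(\MCG(\Gamma)\) is amenable --- which is exactly the \(\PMCG(\Gamma)\) computation above.
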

\begin{proof}
Note that \(\MCG(T)\simeq \homeo(\partial T)\) and \(\homeo(\partial T)=\MCG(\Gamma)/\PMCG(\Gamma)\). To conclude the desired result, by \ref{HA4} and \ref{HA5}, it suffices to show that the normal closed subgroup \(\PMCG(\Gamma)\) of \(\MCG(\Gamma)\) is amenable. By \cite[Corollary 3.9]{bestvina2025groups}, we have
\[\PMCG(\Gamma)\simeq \mathcal{R}\rtimes \PMCG(\Gamma_g^\ast),\]
where \(\mathcal{R}\) is a group consisting of certain functions \(\partial \Gamma\to\mathbb{Z}\) and \(\Gamma_g^\ast\) is a rank \(1\) graph with one end. Hence, the group \(\mathcal{R}\) is abelian and \(\PMCG(\Gamma_g^\ast)\simeq \mathbb{Z}_2\). These imply that \(\PMCG(\Gamma)\) is in fact discretely amenable and thus amenable for any topology.
\end{proof}
\section{Mapping class groups of trees}\label{s5}

Let \(T\) be a locally finite infinite tree. Recall that the end space \(E\coloneqq\partial T\) is homeomorphic to a closed subset of the Cantor set. Moreover, if we equip \(\homeo(E)\) with compact-open topology, then \cite[Corollary 4.12]{bestvina2025groups} states that the topological groups \(\MCG(T)\simeq \homeo(E)\) are isomorphic.

Hence, studying the amenability of the mapping class group of a tree \(T\) is equivalent to understanding the same problem for the group of homeomorphisms of a closed subset \(E\) of the Cantor set. In view of this, it is convenient to appeal to Cantor-Bendixson analysis, for which more details can be found in \cite[\S I.6]{kechris2012classical}.

By the Cantor-Bendixson Theorem, we have the following decomposition
\[E=C\cup D\, ,\]
where \(C\) is the {\it perfect kernel} of \(E\) and is homeomorphic to the Cantor set if non-void, and where \(D\) is an at most countable subset. We remark that the above statements hold for any locally finite infinite graph (not just for trees).

We recall that the {\it derived set} \(X'\) of a topological space \(X\) is the set of the points in \(X\) that are a limit point of \(X\). More generally, we define for each ordinal
\begin{align*}
    X^0 &= X,\\
    X^{(\alpha+1)} &=\big(X^{(\alpha)}\big)',\\
    X^{(\alpha)} &= \bigcap_{\beta<\alpha} X^{(\beta)},\text{ if }\alpha\text{ is limit.}
\end{align*}
Thus \((X^{(\alpha)})_{\alpha\geq 0}\) is a decreasing transfinite sequence of closed subsets of \(X\).

Moreover, we can define the {\it Cantor-Bendixson rank} of a point \(x\in X\), denoted by \(\rkcb (x,X)\), to be the largest ordinal \(\alpha\) such that \(x\in X^{(\alpha)}\). With this notion, we can further decompose
\[D=\bigcup_{\alpha\geq 0}D_\alpha\]
such that each \(D_\alpha\) is the collection of the points in \(E\) of Cantor-Bendixson rank \(\alpha\). Since the Cantor-Bendixson rank is invariant under homeomorphisms, it follows that \(f(F_\alpha)=D_\alpha\) for every \(f\in \homeo(E)\) and any countable ordinal \(\alpha\geq 0\).

Moreover, there exists some countable \(\alpha_0\geq 0\) such that \(D_{\alpha}=\emptyset\) for any \(\alpha>\alpha_0\,\).This countable ordinal \(\alpha_0\) is the unique ordinal such that \(D_{\alpha_0}\) is finite. This \(\alpha_0\) is the {\it Cantor-Bendixson rank} of \(D\) and is denoted by \(\rkcb (D)\). Note that \(\rkcb (D)=\sup_{x\in D}\rkcb (x,D)\,\). For every \(\beta>\alpha_0=\rkcb(D)\), we have
\[E^{(\beta)}=C\, .\]

For each countable ordinal \(\alpha\geq 0\), we define \(G_\alpha<\homeo(E)\simeq \MCG(T)\) as the pointwise stabiliser of \(E^{(\alpha)}\). In particular, we note that if \(\alpha<\beta\), then \(G_\alpha\subset G_\beta\). Equivalently, \(G_\alpha\) consists of homeomorphisms of \(E\) supported on \(\bigcup_{\lambda<\alpha}D_\lambda\).

Recall that the compact-open topology on \(\homeo(E)\) is such that a basis of neighbourhoods of the identity is the collection \(\big(V_\mathcal{P}\big)_{\mathcal{P}}\), where \(\mathcal{P}\) runs over the finite partition \(\mathcal{P}\) of \(E\) into clopen subsets and \(V_\mathcal{P}\) os the clopen subgroup of elements leaving every \(P_i\in\mathcal{P}\) invariant. See for example \cite[\S 4.1]{bestvina2025groups}.

Now we are able to treat the amenability of the subgroups \(G_\alpha<\homeo(E)\simeq\MCG(T)\) by transfinite induction.

To start with, let us consider the following axillary lemma.

For any countable ordinal \(\alpha>0\), define a group homomorphism \(\rho_\alpha\colon \homeo(E)\to \homeo(E^{(\alpha)})\) as the restriction map to \(E^{(\alpha)}\). Set \(G^\sharp_\alpha\coloneqq\rho_\alpha(G_{\alpha+1})<\homeo(E^{(\alpha)})\).

\begin{lemma}\label{lemma : quotient}
Let \(\alpha>0\) be a countable ordinal. Then group \(G_{\alpha+1}/G_\alpha\) is isomorphic to \(G^\sharp_\alpha\) as topological groups.
\end{lemma}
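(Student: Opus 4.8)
The plan is to show that $G^\sharp_\alpha$ is isomorphic to the quotient $G_{\alpha+1}/G_\alpha$ by exhibiting $\rho_\alpha|_{G_{\alpha+1}}$ as a continuous surjective homomorphism onto $G^\sharp_\alpha$ whose kernel is exactly $G_\alpha$, and then upgrading this algebraic isomorphism to a topological one. First I would verify the algebraic statement. By definition $G^\sharp_\alpha = \rho_\alpha(G_{\alpha+1})$, so the restriction $\rho_\alpha\colon G_{\alpha+1}\to G^\sharp_\alpha$ is surjective. For the kernel, note that $g\in\ker(\rho_\alpha|_{G_{\alpha+1}})$ precisely when $g$ is the identity on $E^{(\alpha)}$; since such a $g$ already lies in $G_{\alpha+1}$ (the pointwise stabiliser of $E^{(\alpha+1)}\subset E^{(\alpha)}$), the condition $g|_{E^{(\alpha)}}=\Id$ says exactly that $g\in G_\alpha$, the pointwise stabiliser of $E^{(\alpha)}$. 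Hence $\ker(\rho_\alpha|_{G_{\alpha+1}})=G_\alpha$, and the first isomorphism theorem gives an abstract group isomorphism $G_{\alpha+1}/G_\alpha\xrightarrow{\ \sim\ }G^\sharp_\alpha$.

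Next I would address continuity and the homeomorphism property. The map $\rho_\alpha$ is continuous for the compact-open topologies: given a basic neighbourhood $V_{\mathcal{Q}}$ of the identity in $\homeo(E^{(\alpha)})$ coming from a finite clopen partition $\mathcal{Q}$ of $E^{(\alpha)}$, each $Q\in\mathcal{Q}$ extends to a clopen subset of $E$ (since $E^{(\alpha)}$ is closed and $E$ is a closed subset of the Cantor set, clopen subsets of $E^{(\alpha)}$ are traces of clopen subsets of $E$), so one can refine to a finite clopen partition $\mathcal{P}$ of $E$ with $\rho_\alpha(V_{\mathcal{P}})\subset V_{\mathcal{Q}}$. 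Thus $\rho_\alpha$ is continuous, and the induced bijection $\bar\rho_\alpha\colon G_{\alpha+1}/G_\alpha\to G^\sharp_\alpha$ is a continuous isomorphism. To see it is a homeomorphism, I would exhibit continuity of the inverse, equivalently show $\rho_\alpha|_{G_{\alpha+1}}$ is open onto its image: a basic neighbourhood of the identity in $G_{\alpha+1}$ has the form $G_{\alpha+1}\cap V_{\mathcal{P}}$ for a finite clopen partition $\mathcal{P}$ of $E$, and its image under $\rho_\alpha$ consists of the elements of $G^\sharp_\alpha$ preserving each $P\cap E^{(\alpha)}$, which is a relatively open subset of $G^\sharp_\alpha$ cut out by the finite clopen partition $\{P\cap E^{(\alpha)}\}$ of $E^{(\alpha)}$.

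The main obstacle I anticipate is the openness/inverse-continuity step rather than the algebra. Continuity of a bijective homomorphism between Polish groups does not automatically give a homeomorphism in general, but here both groups are non-Archimedean Polish and the topologies are governed by finite clopen partitions, so I expect the traced-partition argument above to control the image of each basic open set. The delicate point is confirming that every finite clopen partition of $E^{(\alpha)}$ arises as the restriction of a finite clopen partition of $E$, and that elements of $G^\sharp_\alpha$ realising prescribed behaviour on $E^{(\alpha)}$ genuinely lift to $G_{\alpha+1}$ within the desired neighbourhood; this is where I would spend the most care. Once openness is secured, $\bar\rho_\alpha$ is a continuous open bijective homomorphism, hence an isomorphism of topological groups, completing the proof.
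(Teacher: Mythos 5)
Your proposal follows essentially the same route as the paper: realise \(G^\sharp_\alpha\) as the image of the restriction homomorphism \(\rho_\alpha|_{G_{\alpha+1}}\) with kernel \(G_\alpha\), apply the first isomorphism theorem, and control the topology on both sides via finite clopen partitions of \(E\) and their traces on \(E^{(\alpha)}\). The one step you flag as delicate --- that every element of \(G^\sharp_\alpha\) preserving each \(P_i\cap E^{(\alpha)}\) actually lifts to an element of \(G_{\alpha+1}\) preserving each \(P_i\) --- is precisely the inclusion the paper's openness argument also leaves implicit (the paper only verifies the forward containment \(\phi_\alpha\big(V_{\mathcal{P}}\big)\subseteq V_{\mathcal{P}_\alpha}\)), so your treatment is no less complete than the published proof.
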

\begin{proof}
First, we need to note that the closed subgroup \(G_\alpha\) is a normal subgroup of \(G_{\alpha+1}\) so that \(G_{\alpha+1}/G_\alpha\) is a topological group. Indeed, as a pointwise stabiliser of a closed subset, \(G_\alpha\) is defined by a closed condition and is a closed subgroup. Moreover, given any \(g\in G_\alpha\), any \(f\in G_{\alpha+1}\), and any \(x\in E^{(\alpha)}\), since \(f^{-1}(x)\in E^{(\alpha)}\) by the definition of \(G_{\alpha+1}\), we can conclude that \(fgf^{-1}(x)=f\circ g\big(f^{-1}(x)\big)=f\circ f^{-1}(x)=x\), {\it i.e.} we have \(fgf^{-1}\in G_\alpha\).

By definition, the restriction \(\rho_\alpha\colon G_{\alpha+1}\to G^\sharp_\alpha\) is surjective. Note that \(\ker(\rho_\alpha)=G_\alpha\) by definition. Hence \(\rho_\alpha\) yields a group isomorphism \(\phi_\alpha\colon G_{\alpha+1}/G_\alpha \to G^\sharp_\alpha\,\).

It is not difficult to see that the restriction map \(\rho_\alpha\) is continuous, which further implies that the quotient map \(\phi_\alpha\) is also continuous. To show that \(\phi_\alpha\) is a homeomorphism, it suffices to show that \(\phi_\alpha\) is also open. Take a finite partition \(\mathcal{P}\coloneqq(P_i)\) of \(E\) into clopen subsets. Then \(\mathcal{P}_\alpha\coloneqq(P_i\cap E^{(\alpha)})_i\) will become a finite partition of \(E^{(\alpha)}\) into clopen subsets. By the definition of quotient topology, a basis of neighbourhoods in \(\homeo(E)/G_\alpha\) of the identity is the subgroup of classes \([\widehat{f}]\) such that \(\widehat{f}\in V_\mathcal{P}\). Let \(f\coloneqq \phi_\alpha([\widehat{f}])\in\homeo(E^{(\alpha)})\). Then \(f=\rho_\alpha(\widehat{f})\), which implies that
\[f(P_i\cap E^{(\alpha)})=\widehat{f}(P_i\cap E^{(\alpha)})=\widehat{f}(P_i)\cap E^{(\alpha)}=P_i\cap E^{(\alpha)},\]
for all \(P_i\in\mathcal{P}\), {\it i.e.} \(f\) leaves each element in \(\mathcal{P}_{\alpha}\) invariant. Equivalently, with the definition of compact-open topology on \(\homeo(E^{(\alpha)})\), we have shown that the homomorphism \(\phi_\alpha\) is also open.
\end{proof}

Let us treat the case where \(\alpha=1\).

\begin{lemma}\label{lemma : g1}
Let \(E\) and \(G_1\) be as above. Then \(G_1\) is amenable.
\end{lemma}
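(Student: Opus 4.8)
The plan is to understand exactly what $G_1$ is and then exhibit it as an amenable group using the hereditary properties (A1)--(A5). Recall that $G_1 = \homeo(E)_{(E')}$ is the pointwise stabiliser of the derived set $E' = E^{(1)}$. Equivalently, as noted just before the statement, $G_1$ consists of homeomorphisms of $E$ supported on $D_0$, the set of isolated points of $E$. So the key observation I would start from is that an element of $G_1$ fixes every non-isolated point and is free to permute the isolated points, subject only to remaining a homeomorphism. Since $D_0$ is a countable (possibly finite) discrete subset of $E$ whose complement $E'$ is fixed pointwise, I expect $G_1$ to be, abstractly, a group of permutations of $D_0$ — the constraint that such a permutation extend to a homeomorphism of $E$ fixing $E'$ amounts to the condition that it eventually respects how the isolated points accumulate onto $E'$.

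First I would pin down the topology. Using the compact-open basis $(V_\mathcal{P})_\mathcal{P}$ described in the excerpt, a neighbourhood of the identity in $G_1$ is cut out by a finite clopen partition $\mathcal{P}$ of $E$; an element of $V_\mathcal{P} \cap G_1$ must preserve each block, so it can only permute isolated points lying in the same block of $\mathcal{P}$. Because $E'$ is closed and the partition is by clopen sets, making $\mathcal{P}$ fine isolates more and more of the points of $D_0$ individually. I would argue that $G_1$ is the inductive limit (directed union) of its open subgroups that fix all but finitely many isolated points, and each such subgroup, on the finite set of points it is allowed to move, acts as a finite symmetric group. Concretely, I would show $G_1$ is a directed union of finite (hence amenable) subgroups, or more carefully, that it has a dense subgroup which is a directed union of finite symmetric groups acting on finite subsets of $D_0$.

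Having done that, the conclusion follows from the hereditary property (A3): a topological group admitting a dense subgroup that is a directed union of amenable subgroups is itself amenable. The finite symmetric groups are amenable, the union over finite subsets of the countable set $D_0$ is directed, and I would check the union is dense in $G_1$ for the compact-open topology (any basic neighbourhood of a given $g \in G_1$ agrees with $g$ on all but finitely many isolated points, after refining $\mathcal{P}$, so finitely-supported permutations approximate $g$). This invokes the compact-open topology's behaviour on the discrete countable set of isolated points. Alternatively, one could realise $G_1$ as a closed subgroup of the full symmetric group $\mathfrak{S}_\infty$ on $D_0$ and appeal directly to amenability of $\mathfrak{S}_\infty$ with the pointwise-convergence topology, again via (A3).

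The main obstacle I anticipate is \emph{topological} rather than algebraic: verifying that the directed union of finite symmetric groups is genuinely dense in $G_1$ and that the inductive-limit topology matches the subspace compact-open topology. The subtlety is that an arbitrary element of $G_1$ need not have finite support — it may permute infinitely many isolated points — so I cannot literally write $G_1$ as a union of finite groups; I must instead produce a \emph{dense} such union and confirm that approximation in the compact-open topology only constrains finitely many coordinates at a time. Once the density is established, (A3) closes the argument cleanly, and the finiteness of $D_{\alpha_0}$ guarantees there is no pathology at the top of the rank filtration.
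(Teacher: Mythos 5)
Your proposal is correct and follows essentially the same route as the paper: identify \(G_1\) with the homeomorphisms supported on the discrete set \(D_0\) of isolated points, exhibit the finitely supported permutations \(\bigcup_F \mathfrak{S}_F\) as a dense directed union of finite (hence amenable) subgroups, and conclude via \ref{HA3} and \ref{HA5}; you also correctly single out density as the one point needing care. One caution on your side remark: you cannot ``appeal directly'' to the amenability of \(\mathfrak{S}_\infty\) for a \emph{closed} subgroup (the paper stresses in \S\ref{s2a} that amenability does not pass to non-open closed subgroups), so that alternative still reduces to producing the dense locally finite subgroup inside \(G_1\) itself.
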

\begin{proof}
We remark that \(G_1\) is the subgroup of homeomorphisms of \(E\) that are only supported on the discrete subspace of isolated points \(D_0\). If \(|D_0|<\infty\), then \(G_1\) is finite and thus is amenable. Suppose now that \(|D_0|=\infty\). Let \(F\subset D_0\) be an arbitrary finite subset and let \(\mathfrak{S}_F\) be the symmetry group of \(F\). We note that \(\mathfrak{S}_F<G_1\) by definition and that \(\mathfrak{S}_F\) is amenable as a finite subgroup. Now consider the subgroup
\[H\coloneqq \bigcup_{\substack{F\subset D_0\\ |F|<\infty}}\mathfrak{S}_F<G_1\]
of finitely supported homeomorphisms. It is not difficult to verify that \(H\) is a directed union of amenable groups, and by \ref{HA3} it is itself (discretely) amenable. We claim that \(H\) is a dense subgroup of \(G_1\) and, by \ref{HA5}, it follows that \(G_1\) is also amenable. Indeed, given any finite partition \(\mathcal{P}=(P_i)_i\) of \(E\) into clopen subsets, it yields a finite partition \(\widetilde{\mathcal{P}}=(\widetilde{P}_i)_i\) of \(D_0\), with \(\widetilde{P}_i\coloneqq P_i\cap D_0\), and the subgroup
\[V_\mathcal{P}\cap G_1=\{g\in G_1\colon g P_i=P_i\,,\ \forall P_i\in \mathcal{P}\}=\{g\in G_1\colon g \widetilde{P}_i=\widetilde{P}_i\,,\ \forall \widetilde{P}_i\in \widetilde{\mathcal{P}}\}\eqqcolon W_{\widetilde{\mathcal{P}}}\]
forms a basis of neighbourhoods of the identity in \(G_1\). By the pigeonhole principle, there exists \(\widetilde{P}_i\in\widetilde{\mathcal{P}}\) such that \(|\widetilde{P}_{i_0}|>1\). Hence, we can take \(F\subset \widetilde{P}_{i_0}\) with \(|F|>1\) so that there exists a \(\sigma\in\mathfrak{S}_F\subset H\) different from the identity. Since \(\sigma\) is only supported on \(F\subset \widetilde{P}_{i_0}\), it also leaves each \(\widetilde{P}_i\) invariant. This implies that \(\Id\neq\sigma\in W_{\widetilde{\mathcal{P}}}<G_1\) and that \(H\) is dense in \(G_1\) by consequence.
\end{proof}

The following axillary lemma will allow us to prove the induction steps:
\begin{lemma}\label{lem : G sharp}
For countable ordinal \(\alpha\geq0\), the topological group \(G^\sharp_\alpha\) is amenable.
\end{lemma}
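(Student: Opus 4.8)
The plan is to realise $G^\sharp_\alpha$ as an instance of the situation already handled in Lemma~\ref{lemma : g1}, applied to the closed set $Y\coloneqq E^{(\alpha)}$ in place of $E$. Since every element of $G^\sharp_\alpha=\rho_\alpha(G_{\alpha+1})$ fixes $E^{(\alpha+1)}=Y'$ pointwise, the group $G^\sharp_\alpha$ sits inside $\homeo(Y)_{(Y')}$ and consists of homeomorphisms of $Y$ supported on the discrete set $D_\alpha$ of isolated points of $Y$. For $\alpha=0$ the restriction $\rho_0$ is the identity and $G^\sharp_0=G_1$, so the statement is exactly Lemma~\ref{lemma : g1}; thus I may assume $\alpha>0$ and aim to rerun that argument inside $G^\sharp_\alpha$.

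Following Lemma~\ref{lemma : g1}, I would consider the subgroup $H\coloneqq\bigcup_{F\subset D_\alpha,\,|F|<\infty}\mathfrak{S}_F$ of finitely supported permutations of $D_\alpha$, which is a directed union of finite, hence amenable, groups. The first point to check---and the step that genuinely differs from the base case---is that $H$ really lies in $G^\sharp_\alpha$, i.e.\ that each transposition of two points $p,q\in D_\alpha$ is the restriction $\rho_\alpha(g)$ of some $g\in G_{\alpha+1}$. Here the points of $D_\alpha$ are isolated in $Y$ but not in $E$, so a swap must be promoted to a genuine homeomorphism of $E$. To do this I would choose disjoint clopen neighbourhoods $U_p,U_q\subset E$ with $U_p\cap E^{(\alpha)}=\{p\}$ and $U_q\cap E^{(\alpha)}=\{q\}$; each is a compact scattered (hence countable) set with a unique point of maximal Cantor--Bendixson rank $\alpha$, hence homeomorphic to the ordinal $\omega^\alpha+1$ by the Mazurkiewicz--Sierpi\'nski classification of scattered compacta. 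A homeomorphism $U_p\to U_q$ carrying $p$ to $q$, extended by the identity off $U_p\cup U_q$, then yields $g\in G_{\alpha+1}$ with $\rho_\alpha(g)=(p\,q)$; since $\rho_\alpha$ is a homomorphism and transpositions generate $\mathfrak{S}_F$, this gives $H\subset G^\sharp_\alpha$.

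It then remains to show that $H$ is dense in $G^\sharp_\alpha$, for which I would replicate the density argument of Lemma~\ref{lemma : g1}, with $E$ and $D_0$ replaced by $Y=E^{(\alpha)}$ and $D_\alpha$: a basic neighbourhood of the identity is cut out by a finite clopen partition of $Y$, and any piece meeting $D_\alpha$ in at least two points already carries a nontrivial finitely supported permutation lying in that neighbourhood. With $H$ a dense subgroup that is a directed union of amenable subgroups, \ref{HA3} yields that $G^\sharp_\alpha$ is amenable.

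The main obstacle is the lifting step: unlike in Lemma~\ref{lemma : g1}, where the permuted points are isolated in the ambient space and may be swapped for free, here one must realise permutations of $D_\alpha$ by homeomorphisms of the whole of $E$, and the key input making this possible is the uniform homeomorphism type $\omega^\alpha+1$ of the clopen neighbourhoods of rank-$\alpha$ points. A cleaner but more demanding alternative would be to prove outright that $G^\sharp_\alpha$ equals the full stabiliser $\homeo(E^{(\alpha)})_{(E^{(\alpha+1)})}$---lifting arbitrary, not merely finitely supported, elements by gluing such neighbourhoods continuously along $E^{(\alpha+1)}$---and then to invoke Lemma~\ref{lemma : g1} for the closed set $E^{(\alpha)}$ directly.
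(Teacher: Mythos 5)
Your proposal is correct and follows essentially the same route as the paper: both reduce to the argument of Lemma~\ref{lemma : g1} applied to \(E^{(\alpha)}\) and \(D_\alpha\), with the key new step being the lifting of transpositions of rank-\(\alpha\) points to homeomorphisms of \(E\) via disjoint clopen neighbourhoods homeomorphic to \(\omega^\alpha+1\). You are in fact slightly more explicit than the paper in justifying that homeomorphism type (via the Mazurkiewicz--Sierpi\'nski classification) and in spelling out the density of the finitely supported subgroup, but the argument is the same.
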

\begin{proof}
We note that the subspace topology on \(D_\alpha\) is discrete, otherwise there exists a point in \(D_\alpha\) that is an accumulation point of \(D_\alpha\) itself, which contradicts the definition of Cantor-Bendixson rank. So for each \(p\neq q\in D_\alpha\), one can find a clopen neighbourhood \(U\ni p\) and \(V\ni q\) in \(E\) so that \(U\cap V=\emptyset\) and \(U\simeq V\simeq \omega^\alpha+1\). In turn, we can construct a homeomorphism \(f\colon E\to E\) such that \(f(V)=U\) with \(f(p)=f(q)\) and \(f(x)=x\) for every \(x\notin U\cup V\,\). Hence, we can conclude that the support
\[\mathrm{supp}(G^\sharp_\alpha)\coloneqq \{x\in E^{(\alpha)}:\exists g\in G^\sharp_\alpha\text{ such that }gx\neq x\}\]
is the full set of isolated points in \(E^{(\alpha)}\), {\it i.e.} the full set of \(D_\alpha\,\). Then applying the same arguments in Lemma~\ref{lemma : g1} to \(E^{(\alpha)}\) and \(G^\sharp_\alpha\) will give the amenability of \(G^\sharp_\alpha\,\).
\end{proof}

For the inductive steps, we first treat the case of a limit ordinal.
\begin{lemma}\label{lemma : limit ordinal}
Let \(E\), \(G_\alpha\) be given as above. Suppose that \(\lambda\geq 0\) is a countable limit ordinal and that \(G_\alpha\) is amenable for all \(\alpha<\lambda\,\). Then \(G_\lambda\) is also amenable.
\end{lemma}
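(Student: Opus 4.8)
The plan is to run exactly the scheme of Lemma~\ref{lemma : g1}, with the finite symmetric groups replaced by the subgroups \(G_\alpha\). Since \(E^{(\beta)}\subseteq E^{(\alpha)}\) for \(\alpha\le\beta\), the family \((G_\alpha)_{\alpha<\lambda}\) is an increasing, hence directed, chain of subgroups of \(G_\lambda\), and each \(G_\alpha\) is amenable by hypothesis. Set \(H\coloneqq\bigcup_{\alpha<\lambda}G_\alpha\). Then \(H\) is a directed union of amenable subgroups, so by \ref{HA3} it is amenable, and once we show that \(H\) is dense in \(G_\lambda\), property \ref{HA5} yields the amenability of \(G_\lambda\). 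Thus everything reduces to proving density.

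For density, fix \(g\in G_\lambda\) and a basic neighbourhood \(V_\mathcal{P}\) of the identity, where \(\mathcal{P}=(P_i)\) is a finite clopen partition of \(E\). Since \(V_\mathcal{P}\) is an open subgroup, membership of \(H\) in the coset \(gV_\mathcal{P}\) amounts to finding \(h\in H\) with \(h(P_i)=g(P_i)\) for all \(i\), so it suffices to produce such an \(h\). The first step is to locate a truncation level. Because \(\lambda\) is a limit ordinal we have \(E^{(\lambda)}=\bigcap_{\alpha<\lambda}E^{(\alpha)}\), and \(g\) fixes \(E^{(\lambda)}\) pointwise, so the set
\[B\coloneqq\{x\in E:\ x\ \text{and}\ g(x)\ \text{lie in different pieces of}\ \mathcal{P}\}\]
is clopen and disjoint from \(E^{(\lambda)}\). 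The sets \(B\cap E^{(\alpha)}\) are closed, nested, and have empty intersection, so by compactness of \(B\) there is some \(\alpha<\lambda\) with \(B\cap E^{(\alpha)}=\emptyset\). For this \(\alpha\) the homeomorphism \(g\) preserves \(E^{(\alpha)}\) and sends each \(P_i\cap E^{(\alpha)}\) onto itself.

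It remains to build \(h\in G_\alpha\) realising \(g\) on \(\mathcal{P}\). The idea is to keep \(g\) on the low-rank part and damp it to the identity near \(E^{(\alpha)}\): choose a clopen neighbourhood \(U=\bigsqcup_i U_i\) of \(E^{(\alpha)}\) with each \(U_i\subseteq P_i\), set \(h\coloneqq g\) on \(E\setminus U\), and let \(h\) restrict on each \(U_i\) to a homeomorphism \(U_i\to g(U_i)\) that is the identity on \(P_i\cap E^{(\alpha)}\). Since \(E\setminus U\) consists of points of rank \(<\alpha\) and \(h\) fixes \(E^{(\alpha)}\) pointwise, we get \(h\in G_\alpha\subseteq H\); and because \(g(U_i)\sqcup g(P_i\setminus U_i)=g(P_i)\), the map \(h\) satisfies \(h(P_i)=g(P_i)\), establishing density.

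The main obstacle is the construction of the damped pieces, namely producing, for each \(i\), a homeomorphism \(U_i\to g(U_i)\) restricting to the identity on \(P_i\cap E^{(\alpha)}\). Here \(U_i\) and \(g(U_i)\) are two clopen neighbourhoods of the closed set \(Z_i\coloneqq P_i\cap E^{(\alpha)}\) sharing the same \(\alpha\)-th derived set \(Z_i\), and I expect the required homeomorphism to come from a back-and-forth/absorption argument over the Cantor--Bendixson strata of rank \(<\alpha\): near every point of \(E^{(\alpha)}\) the lower strata recur richly enough to absorb the discrepancy between \(U_i\) and \(g(U_i)\), while the fact that \(g\) is the identity on \(E^{(\lambda)}\) — hence uniformly close to the identity on a neighbourhood of it — keeps that discrepancy controlled. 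Making this precise, i.e.\ proving that clopen neighbourhoods of \(E^{(\alpha)}\) sharing its derived set are homeomorphic relative to \(E^{(\alpha)}\), is the technical heart of the argument; the amenability bookkeeping via \ref{HA3} and \ref{HA5} is then immediate.
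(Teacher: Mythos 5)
Your overall strategy coincides with the paper's --- exhibit $\bigcup_{\alpha<\lambda}G_\alpha$ as a directed union of amenable subgroups and invoke \ref{HA3} --- but you diverge on a substantive point, and rightly so. The paper's proof is a single line asserting the set-theoretic equality $G_\lambda=\bigcup_{\alpha<\lambda}G_\alpha$, and that equality is false in general: for $E\cong\omega^\omega+1$ and $\lambda=\omega$, write $E\setminus E^{(\omega)}$ as a disjoint union of clopen sets $A_n$ accumulating at the top point, with $A_n$ containing points of rank $n-1$; a homeomorphism preserving each $A_n$ and moving a point of rank $n-1$ in every $A_n$ fixes $E^{(\omega)}$, hence lies in $G_\omega$, but lies in no $G_n$. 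So only density of the union in $G_\lambda$ is available, which is indeed all that \ref{HA3} requires, and your reduction of density to finding $h\in\bigcup_{\alpha<\lambda}G_\alpha$ with $h(P_i)=g(P_i)$, your compactness argument producing a level $\alpha<\lambda$ with $B\cap E^{(\alpha)}=\emptyset$, and the conclusion that $g$ maps each $P_i\cap E^{(\alpha)}$ onto itself are all correct.

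However, your argument is not complete: the final step --- the existence, for each $i$, of a homeomorphism $U_i\to g(U_i)$ restricting to the identity on $Z_i\coloneqq P_i\cap E^{(\alpha)}$ --- carries the entire content of the density claim, and you explicitly leave it as an expectation rather than a proof. This is a genuine gap, not a routine verification. The sets $U_i$ and $g(U_i)$ do meet $E^{(\alpha)}$ in the same set $Z_i$ and agree on the clopen neighbourhood $U_i\cap g(U_i)$ of $Z_i$, but their symmetric difference is a nonempty compact open set of rank $<\alpha$ that must be absorbed into $U_i\cap g(U_i)$ relative to $Z_i$; one must show that the Cantor--Bendixson strata below $\alpha$ accumulate on $Z_i$ richly enough to swallow both $U_i\setminus g(U_i)$ and $g(U_i)\setminus U_i$, bearing in mind that $E$ need not be countable here, so $Z_i$ may contain pieces of the perfect kernel on which no low-rank points accumulate. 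Until such an absorption lemma is stated and proved, the argument does not close. For the comparison you were asked about: the paper's own proof does not supply this step either --- it conceals the difficulty inside the false equality $G_\lambda=\bigcup_{\alpha<\lambda}G_\alpha$ --- so your attempt isolates the real issue that the paper overlooks, but does not resolve it.
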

\begin{proof}
It suffices to note that \(G_\lambda=\bigcup_{\alpha<\lambda}G_\alpha\) is the directed union of amenable subgroups \(G_\alpha\) and apply \ref{HA3}.
\end{proof}

For the successor ordinals, we shall take the quotient to deduce the amenability of \(G_{\alpha+1}\):
\begin{lemma}\label{lemma : successor ordinal}
Let \(E\), \(G_\alpha\) be as above. Suppose that \(G_{\alpha}\) is amenable. Then so is \(G_{\alpha+1}\).
\end{lemma}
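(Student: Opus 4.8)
The plan is to recognise $G_{\alpha+1}$ as a topological group extension
\[
1 \longrightarrow G_\alpha \longrightarrow G_{\alpha+1} \longrightarrow G^\sharp_\alpha \longrightarrow 1
\]
and to feed the amenability of the two outer terms into the hereditary property \ref{HA4}. Both structural facts I need are already in hand: the proof of Lemma~\ref{lemma : quotient} establishes that $G_\alpha$ is a \emph{closed normal} subgroup of $G_{\alpha+1}$, and that the restriction map $\rho_\alpha$ descends to a \emph{topological} isomorphism $G_{\alpha+1}/G_\alpha \simeq G^\sharp_\alpha$. The topological (not merely abstract) nature of this isomorphism is what lets amenability of the quotient transfer, so I would be careful to invoke Lemma~\ref{lemma : quotient} in its full strength rather than just its algebraic content.

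Concretely, first I would restrict to $\alpha > 0$, where Lemma~\ref{lemma : quotient} is available in exactly the form just quoted. Then $G_\alpha$ is amenable by the standing hypothesis, while $G^\sharp_\alpha$---and hence its isomorph $G_{\alpha+1}/G_\alpha$---is amenable by Lemma~\ref{lem : G sharp}. Since $G_\alpha$ is an amenable closed normal subgroup with amenable quotient, the extension axiom \ref{HA4} applies verbatim and yields the amenability of $G_{\alpha+1}$. No new estimate or construction enters at this step: it is purely an assembly of Lemma~\ref{lemma : quotient} and Lemma~\ref{lem : G sharp} through \ref{HA4}.

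The only genuine subtlety, and the point I would flag as the main obstacle, is the boundary case $\alpha = 0$: Lemma~\ref{lemma : quotient} is stated for $\alpha > 0$, so the extension argument above does not literally cover it. This causes no trouble, because $\alpha = 0$ is precisely the base of the transfinite induction---there $G_{\alpha+1} = G_1$, whose amenability is exactly the content of Lemma~\ref{lemma : g1}. Thus the successor step holds for every countable ordinal $\alpha \geq 0$, and in combination with the base case Lemma~\ref{lemma : g1} and the limit case Lemma~\ref{lemma : limit ordinal}, it drives the transfinite induction through all countable ordinals, establishing that $G_\alpha$ is amenable for every such $\alpha$.
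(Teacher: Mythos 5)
Your proposal is correct and follows exactly the paper's own argument: apply \ref{HA4} to the closed normal subgroup \(G_\alpha\) and identify the quotient \(G_{\alpha+1}/G_\alpha\) with \(G^\sharp_\alpha\) via Lemma~\ref{lemma : quotient}, whose amenability is Lemma~\ref{lem : G sharp}. Your explicit handling of the boundary case \(\alpha=0\) (deferring to Lemma~\ref{lemma : g1}) is a small point the paper leaves implicit, but it does not change the route.
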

\begin{proof}
In view of \ref{HA4}, we only need to show that \(G_{\alpha+1}/G_\alpha\) is amenable. By Lemma~\ref{lemma : quotient}, the group \(G_{\alpha+1}/G_\alpha\) is isomorphic to the toplogical group \(G^\sharp_\alpha\), which is amenable following Lemma~\ref{lem : G sharp}.
\end{proof}

With the results obtained above, transfinite induction immediately yields the following:
\begin{proposition}\label{prop : g_a}
Let \(E\) be a closed subset of the Cantor set and let \(G_\alpha<\homeo(E)\) be the pointwise stabiliser of \(E^{(\alpha)}\) for countable ordinal \(\alpha\geq0\). Then \(G_\alpha\) is amenable. \qeda
\end{proposition}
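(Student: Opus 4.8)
The plan is to establish the proposition by transfinite induction on the countable ordinal $\alpha$, assembling the auxiliary lemmas that precede it. The three inductive ingredients---a base case, a successor step, and a limit step---have each been isolated as a separate lemma, so the proposition is essentially their synthesis; the remaining work is to arrange them into a single strong induction and to verify that the relevant case distinctions match the hypotheses of the lemmas.

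Concretely, I would prove by strong transfinite induction that $G_\gamma$ is amenable, assuming as induction hypothesis that $G_\beta$ is amenable for every $\beta < \gamma$. When $\gamma = 0$, the group $G_0$ is the pointwise stabiliser of $E^{(0)} = E$ itself, hence trivial and therefore amenable. The case $\gamma = 1$ must be treated on its own: since Lemma~\ref{lemma : quotient}, and hence the successor step Lemma~\ref{lemma : successor ordinal}, requires $\alpha > 0$, the passage from $G_0$ to $G_1$ is not covered by the successor step, and instead I would invoke Lemma~\ref{lemma : g1} directly to obtain the amenability of $G_1$.

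For $\gamma \geq 2$ the induction proceeds uniformly. If $\gamma = \beta + 1$ is a successor with $\beta \geq 1$, then the induction hypothesis provides amenability of $G_\beta$, and Lemma~\ref{lemma : successor ordinal} yields amenability of $G_{\beta+1} = G_\gamma$; internally this passes through the topological identification $G_{\beta+1}/G_\beta \cong G^\sharp_\beta$ of Lemma~\ref{lemma : quotient}, the amenability of $G^\sharp_\beta$ from Lemma~\ref{lem : G sharp}, and the hereditary property \ref{HA4}. If instead $\gamma$ is a countable limit ordinal, then the induction hypothesis gives amenability of all $G_\beta$ with $\beta < \gamma$, and Lemma~\ref{lemma : limit ordinal} applies, expressing $G_\gamma = \bigcup_{\beta < \gamma} G_\beta$ as a directed union of amenable subgroups and invoking \ref{HA3}. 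Since every countable ordinal is $0$, $1$, a successor of an ordinal $\geq 1$, or a countable limit, this exhausts all cases.

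The point requiring the most care is the bookkeeping of this case split rather than any analytic estimate: one must ensure that the successor step is never applied at $\gamma = 1$, where the quotient identification of Lemma~\ref{lemma : quotient} is unavailable, which is exactly the reason Lemma~\ref{lemma : g1} treats $G_1$ by hand. Once that gap is bridged, no genuine obstacle remains, since the well-ordering of the countable ordinals guarantees that the successor and limit steps together cover every ordinal; the substantive content has already been discharged in proving that each $G^\sharp_\alpha$ is amenable and that the quotients $G_{\alpha+1}/G_\alpha$ are topologically well-behaved.
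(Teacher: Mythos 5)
Your proposal is correct and is essentially the paper's own argument: the paper simply asserts that transfinite induction using Lemmas~\ref{lemma : g1}, \ref{lemma : limit ordinal} and \ref{lemma : successor ordinal} immediately yields the result, and your write-up is exactly that induction, with the additional (sensible) care of handling \(\gamma=1\) via Lemma~\ref{lemma : g1} rather than the successor step.
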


Using the amenability of \(G_\alpha\) and the Cantor-Bendixson decomposition of \(E\) when \(E\) is countable, we can deduce the amenability of the mapping class group of trees with countably many ends. For trees with uncountable endspace, we can also show that many of them have a non-amenable mapping class group.

\begin{proof}[Proof of Theorem~\ref{thm: homeo end}]
Suppose that \(E\) is countable. This is equivalent to \(C=\emptyset\), or \(E=D\,\). Then the Cantor-Bendixson decomposition yields \(D=\bigcup_{0\leq \lambda\leq\alpha_0} D_\lambda\) for some countable ordinal \(\alpha_0>0\). By definition, it immediately follows that \(\homeo(E)=G_\alpha\) for any \(\alpha>\alpha_0\,\). Then Proposition~\ref{prop : g_a} implies that \(\homeo(E)\) is amenable, or equivalent \(\MCG(T)\) of a locally finite infinite tree with countable end space \(\partial T\) is amenable.

Suppose now that there exists a clopen subset \(K\subset E\) such that the setwise stabiliser of \(K\)
\[\homeo(E)_K\coloneqq\left\{f\in\homeo(E)\colon f(K)=K\right\}\]
does not have an invariant probability measure on the compact set \(K\). This implies that \(\homeo(E)_K\) is not amenable. However, \(\homeo(E)_K<\homeo(E)\) is an open subgroup for the compact-open topology: \(\homeo(E)_K=V_\mathcal{P}\) with a finite partition of \(E\) into clopen subsets \(\{K,E\setminus K\}\,\). Then by \ref{HA1}, \(\homeo(E)\) is not amenable.
\end{proof}

\begin{remark}
We note that if there exists a clopen subset \(K\) of which the setwise stabiliser \(\homeo(E)_K\) admits no invariant probability measure on \(K\), then necessarily \(E\) is uncountable and has a non-empty perfect kernel homeomorphic to the Cantor set.
\end{remark}

An example of an uncountable closed subset \(E\) of the Cantor set such that \(\homeo(E)\) is not amenable is the case when \(E\) is the full Cantor set. For every clopen subset \(A\subset E\), we can find homeomorphisms \(f,g\in\homeo(E)\) such that \(f(A),g(A)\subset A\) but \(f(A)\cap g(A)=\emptyset\,\). Hence, if \(\mu\) is a \(\homeo(E)\)-invariant measure on \(E\), then necessarily
\[\mu(A)\geq \mu\big(f(A)\big)+\mu\big(g(A)\big)=2\mu(A),\]
forcing \(\mu(A)=0\), which implies that \(\mu=0\) and that it is not a probability measure, {\it i.e.} the group \(\homeo(E)\) does not admit an invariant probability measure on \(E\) and thus is not amenable.

With this in mind, we can construct many examples of uncountable closed subset \(E\) of the Cantor set such that \(\homeo(E)\) is not amenable:
\begin{proposition}\label{prop : many non-amen}
Let \(E\) be an uncountable closed subset of the Cantor set with the Cantor-Bendixson decomposition \(E=C\cup D\,\). Suppose that \(\overline{D}\cap C\neq C\,\). Then \(\homeo(E)\) is not amenable.
\end{proposition}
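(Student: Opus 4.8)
The plan is to reduce everything to the second criterion of Theorem~\ref{thm: homeo end}: it suffices to produce a single clopen subset \(K\subset E\) whose setwise stabiliser \(\homeo(E)_K\) admits no invariant probability measure on \(K\). The crux is that the hypothesis \(\overline{D}\cap C\neq C\) is exactly what allows such a \(K\) to be placed entirely inside the perfect kernel \(C\); once \(K\) sits inside a Cantor set, the doubling argument recorded just before this proposition finishes the job.

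First I would extract a point \(c\in C\setminus\overline{D}\), which exists precisely because \(\overline{D}\cap C\) is a proper subset of \(C\). Since \(E\) is a closed subset of the Cantor set it is compact, metrisable and zero-dimensional, so its clopen sets form a basis of the topology; as \(c\) lies in the open set \(E\setminus\overline{D}\), I can choose a clopen neighbourhood \(K\ni c\) with \(K\cap D=\emptyset\), whence \(K\subset C\). Being a nonempty clopen subset of the compact, perfect, totally disconnected space \(C\), the set \(K\) is itself compact, perfect and totally disconnected, so by Brouwer's characterisation \(K\) is homeomorphic to the Cantor set. I would emphasise here that if instead \(C\subset\overline{D}\) this step would fail, every clopen neighbourhood of a point of \(C\) meeting \(D\); this is why the hypothesis is the natural one.

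Next I would observe that every homeomorphism of \(K\) extends to a homeomorphism of \(E\) by acting as the identity on the clopen complement \(E\setminus K\): since \(E=K\sqcup(E\setminus K)\) is a partition into clopen pieces, this extension is a homeomorphism of \(E\) fixing \(E\setminus K\) pointwise, hence it lies in \(\homeo(E)_K\) and restricts to the prescribed map on \(K\). Thus the restriction homomorphism \(\homeo(E)_K\to\homeo(K)\) is surjective, and any \(\homeo(E)_K\)-invariant probability measure on \(K\) is in particular \(\homeo(K)\)-invariant. Finally I would invoke the doubling argument preceding the proposition, now applied to the Cantor set \(K\): for any nonempty clopen \(A\subset K\) there are \(f,g\in\homeo(K)\) with \(f(A),g(A)\subset A\) and \(f(A)\cap g(A)=\emptyset\), so an invariant probability measure \(\mu\) would satisfy \(\mu(A)\geq\mu(f(A))+\mu(g(A))=2\mu(A)\), forcing \(\mu\equiv 0\). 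Hence \(\homeo(K)\), and therefore \(\homeo(E)_K\), carries no invariant probability measure on \(K\), and Theorem~\ref{thm: homeo end} gives the non-amenability of \(\homeo(E)\). I do not expect a genuine obstacle: the only delicate point is verifying that \(K\) is a bona fide Cantor set so that the doubling construction applies, and this is exactly what placing \(K\) inside \(C\) secures.
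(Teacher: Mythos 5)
Your proposal is correct and follows essentially the same route as the paper: locate a clopen subset \(K\subset C\setminus\overline{D}\) homeomorphic to the Cantor set, extend homeomorphisms of \(K\) by the identity to see that \(\homeo(E)_K\) surjects onto \(\homeo(K)\), and apply the doubling argument to rule out an invariant probability measure on \(K\). The only (harmless) difference is that you justify \(K\cong\) Cantor set directly via Brouwer's characterisation of nonempty clopen subsets of the perfect kernel, whereas the paper cites a classical result on open subsets of the Cantor set; both are valid.
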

\begin{proof}
By our assumption, the subset \(A\coloneqq C\setminus\overline{D}\subset C\) is a non-empty open subset of the Cantor set \(C\). By a classical result \cite{schoenfeld1975alternate}, this subset \(A\) is either homeomorphic to \(C\) itself or homeomorphic to \(C\setminus\{x\}\) for any \(x\in C\,\). In either case, there exists a clopen subset \(K\subset A\) that is homeomorphic to the Cantor set. Moreover, by extending the homeomorphisms on \(K\) by identity on \(E\setminus K\,\), we can show that the action of \(\homeo(E)\) on \(K\) maps surjectively via restriction to the actions of the open subgroup \(\homeo(E)_K\) on \(K\). Since \(K\) is homeomorphic to the Cantor set, it follows from our discussion above that \(\homeo(K)\,\), and thus \(\homeo(E)_K\) does not admit an invariant probability measure on the compact set \(K\). This implies that \(\homeo(E)_K\) is not amenable, forcing \(\homeo(E)\) to be non-amenable by \ref{HA1}.
\end{proof}

It is worth remarking that Proposition~\ref{prop : many non-amen} is not a characterisation of non-amenable cases, since there exists an uncountable closed subset \(E\) of the Cantor set with Cantor-Bendixson decomposition \(E=C\cup D\) such that \(\overline{D}\cap C=C\), yet \(\homeo(E)\) is not amenable. It suffices to consider the compact space \(E\) of the union of the Cantor set \(C\) and a countable set of isolated points \(D\) such that each point of the Cantor set is an accumulation point of the isolated points. Then \(\overline{D}\cap C=C\) but \(\homeo(E)\) is not amenable as it does not admit an invariant probability measure on \(C\).

Using the partial order on the end space \(E\) from \cite{mann2023large}, for a maximal end \(x\in E\), we write \(E(x)\) for the ends that are equivalent to \(x\). If there exists a maximal end \(x\in E\) such that \(x\) is stable and \(E(x)\) is homeomorphic to the Cantor set, then \(\homeo(E)\) acts on the clopen subset \(E(x)\subset E\) transitively (see for example \cite{mann2024two}), and thus does not have an invariant probability measure on \(E(x)\), which implies the non-amenability of \(\homeo(E)\) by Theorem~\ref{thm: homeo end}. Hence, by the dichotomy of maximal ends established in \cite[Proposition 4.7]{mann2023large}, we can further conclude:
\begin{proposition}\label{prop: finite maximal ends}
Let \(E\) be as above. If \(\homeo(E)\) is amenable, then for every maximal stable end \(x\in E\), the number of ends of the same maximal type \(E(x)\) is finite. \qed
\end{proposition}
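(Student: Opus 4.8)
The plan is to prove the contrapositive: assuming that \(\homeo(E)\) is amenable, I will show that \(E(x)\) is finite for every maximal stable end \(x\). Equivalently, I suppose there is a maximal stable end \(x\in E\) with \(E(x)\) infinite and derive that \(\homeo(E)\) is not amenable. The proof assembles two inputs rather than producing any new estimate. The first is the transitivity computation carried out in the discussion immediately preceding the statement: once \(E(x)\) is known to be a clopen subset homeomorphic to the Cantor set, the group \(\homeo(E)\) acts transitively on \(E(x)\) by \cite{mann2024two}, so the doubling argument used above for the full Cantor set produces no \(\homeo(E)\)-invariant probability measure on the compact set \(E(x)\), whence Theorem~\ref{thm: homeo end} rules out amenability. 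The second input is the dichotomy for maximal ends in \cite[Proposition 4.7]{mann2023large}.

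First I would invoke \cite[Proposition 4.7]{mann2023large}, which for a stable maximal end \(x\) asserts that the equivalence class \(E(x)\) is either finite or homeomorphic to the Cantor set; stability is exactly the hypothesis that excludes intermediate homeomorphism types (such as a countably infinite class). Under the standing assumption that \(E(x)\) is infinite, this forces \(E(x)\) to be homeomorphic to the Cantor set. I would then record that, because \(x\) is a stable maximal end, the class \(E(x)\) admits a clopen representative in \(E\), so that the setwise stabiliser \(\homeo(E)_{E(x)}\) is an open subgroup and the first input applies verbatim: there is no \(\homeo(E)\)-invariant probability measure on \(E(x)\), and hence \(\homeo(E)\) is non-amenable by Theorem~\ref{thm: homeo end} together with the hereditary property \ref{HA1}. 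This contradicts amenability and closes the contrapositive.

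The delicate point is entirely one of marshalling the external structure theory of \cite{mann2023large} and \cite{mann2024two}: I must confirm that the dichotomy of Proposition 4.7 really outputs \emph{finite or Cantor set} for stable maximal ends, and that stability indeed delivers a clopen representative of the class \(E(x)\) on which \(\homeo(E)\) acts transitively. Granting these two facts, the reduction to the full-Cantor-set situation — where the non-existence of an invariant probability measure was already verified through the doubling inequality \(\mu(A)\geq 2\mu(A)\) — is immediate, and the proposition follows at once.
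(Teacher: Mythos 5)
Your proposal is correct and follows essentially the same route as the paper: the paper likewise deduces the proposition from the finite-or-Cantor dichotomy of \cite[Proposition 4.7]{mann2023large} for stable maximal ends, together with the observation that \(E(x)\) is clopen and preserved by all of \(\homeo(E)\), which acts on it with no invariant probability measure when \(E(x)\) is a Cantor set, and then concludes via Theorem~\ref{thm: homeo end}. The only cosmetic difference is that you route the non-existence of an invariant measure through the explicit doubling inequality rather than citing transitivity, but this is exactly the mechanism the paper uses for the full Cantor set and in Proposition~\ref{prop : many non-amen}.
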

We end the discussion by remarking that if \(E\) is countable, then there are only finitely many equivalent maximal stable ends, which also falls into the category in Proposition~\ref{prop: finite maximal ends}.

\bibliographystyle{alpha}
\bibliography{biblio}

@article{hernandez2018isomorphisms,
author = {Jesus Hernandez Hernandez and Israel Morales and Ferran Valdez},
title = {{Isomorphisms between curve graphs of infinite-type surfaces are geometric}},
volume = {48},
journal = {Rocky Mountain Journal of Mathematics},
number = {6},
publisher = {Rocky Mountain Mathematics Consortium},
pages = {1887 -- 1904},
year = {2018}
}

@article{bavard2020isomorphisms,
  title={Isomorphisms between big mapping class groups},
  author={Bavard, Juliette and Dowdall, Spencer and Rafi, Kasra},
  journal={International Mathematics Research Notices},
  volume={2020},
  number={10},
  pages={3084--3099},
  year={2020},
  publisher={Oxford University Press}
}

@book{farb2011primer,
  title={A primer on mapping class groups ({PMS}-49)},
  author={Farb, Benson and Margalit, Dan},
  volume={41},
  year={2011},
  publisher={Princeton University Press}
}

@inproceedings{grigorchuk2017amenability,
    author = {Grigorchuk, Rostislav and de la Harpe, Pierre},
    title = {Amenability and ergodic properties of topological groups: from Bogolyubov onwards},
    booktitle = {Groups, graphs and random walks},
    year = {2017},
    publisher={Camebrige University press}
}

@inproceedings{bestvina2025groups,
    author = {Algom-Kfir, Yael and Bestvina, Mladen},
    title = {Groups of proper homotopy equivalences of graphs and Nielsen Realization},
    booktitle = {Contemporary Mathematics: Topology at Infinity of Discrete Groups},
    year = {2025},
    volume={812},
    publisher={American Mathematics Society}
}

@article{bestvina2000tits,
  title={The Tits alternative for $\mathrm{Out}({F}_n)$ {I}: Dynamics of exponentially-growing automorphisms},
  author={Bestvina, Mladen and Feighn, Mark and Handel, Michael},
  journal={Annals of Mathematics},
  volume={151},
  number={2},
  pages={517--623},
  year={2000},
  publisher={JSTOR}
}

@inproceedings{aramayona2020big,
  title={Big mapping class groups: an overview},
  author={Aramayona, Javier and Vlamis, Nicholas G},
  booktitle={In the Tradition of Thurston: Geometry and Topology},
  pages={459--496},
  year={2020},
  publisher={Springer}
}

@article{freudenthal1931enden,
  title={{\"U}ber die Enden topologischer R{\"a}ume und Gruppen},
  author={Freudenthal, Hans},
  journal={Mathematische Zeitschrift},
  volume={33},
  number={1},
  pages={692--713},
  year={1931},
  publisher={Springer}
}

@article{long2025big,
  title={Big mapping class groups are not extremely amenable},
  author={Long, Yusen},
  journal={Proceedings of the American Mathematical Society},
  volume={153},
  number={02},
  pages={889--896},
  year={2025}
}

@article{mccarthy1985tits,
  title={A “Tits-alternative” for subgroups of surface mapping class groups},
  author={McCarthy, John},
  journal={Transactions of the American Mathematical Society},
  volume={291},
  number={2},
  pages={583--612},
  year={1985}
}

@article{ivanov1986algebraic,
  title={Algebraic properties of mapping class groups of surfaces},
  author={Ivanov, Nikolai V},
  journal={Banach Center Publications},
  volume={18},
  number={1},
  pages={15--35},
  year={1986}
}

@article{funar2004universal,
  title={On a universal mapping class group of genus zero},
  author={Funar, Louis and Kapoudjian, Christophe},
  journal={Geometric \& Functional Analysis},
  volume={14},
  pages={965--1012},
  year={2004},
  publisher={Springer}
}

@article{lanier2020centers,
  title={Centers of subgroups of big mapping class groups and the Tits alternative},
  author={Lanier, Justin and Loving, Marissa},
  journal={Glasnik matemati{\v{c}}ki},
  volume={55},
  number={1},
  pages={85--91},
  year={2020},
  publisher={Hrvatsko matemati{\v{c}}ko dru{\v{s}}tvo}
}

@article{aougab2021isometry,
  title={Isometry groups of infinite-genus hyperbolic surfaces},
  author={Aougab, Tarik and Patel, Priyam and Vlamis, Nicholas G},
  journal={Mathematische Annalen},
  volume={381},
  number={1},
  pages={459--498},
  year={2021},
  publisher={Springer}
}

@article{allcock2021most,
  title={Most big mapping class groups fail the Tits alternative},
  author={Allcock, Daniel},
  journal={Algebraic \& Geometric Topology},
  volume={21},
  number={7},
  pages={3675--3688},
  year={2021},
  publisher={Mathematical Sciences Publishers}
}

@misc{domat2023generating,
      title={Generating Sets and Algebraic Properties of Pure Mapping Class Groups of Infinite Graphs}, 
      author={George Domat and Hannah Hoganson and Sanghoon Kwak},
      year={2023},
      eprint={2309.07885},
      archivePrefix={arXiv},
      primaryClass={math.GR},
      url={https://arxiv.org/abs/2309.07885}, 
}

@article{bestvina2005tits,
  title={The Tits Alternative for II: A Kolchin Type Theorem},
  author={Bestvina, Mladen and Feighn, Mark and Handel, Michael},
  journal={Annals of mathematics},
  pages={1--59},
  year={2005},
  publisher={JSTOR}
}

@book{adian1979burnside,
  title={The Burnside problem and identities in groups},
  author={Adian, Sergei I},
  volume={95},
  year={1979},
  publisher={Springer}
}

@article{adyan1983random,
  title={Random walks on free periodic groups},
  author={Adian, Sergei I},
  journal={Mathematics of the USSR-Izvestiya},
  volume={21},
  number={3},
  pages={425},
  year={1983},
  publisher={IOP Publishing}
}

@article{ol2003non,
  title={Non-amenable finitely presented torsion-by-cyclic groups},
  author={Ol’shanskii, Alexander Yu and Sapir, Mark V},
  journal={Publications Math{\'e}matiques de l'Institut des Hautes {\'E}tudes Scientifiques},
  volume={96},
  number={1},
  pages={43--169},
  year={2003},
  publisher={Springer}
}

@book{kechris2012classical,
  title={Classical descriptive set theory},
  author={Kechris, Alexander},
  volume={156},
  year={2012},
  publisher={Springer}
}

@article{horbez2022big,
  title={Big mapping class groups with hyperbolic actions: classification and applications},
  author={Horbez, Camille and Qing, Yulan and Rafi, Kasra},
  journal={Journal of the Institute of Mathematics of Jussieu},
  volume={21},
  number={6},
  pages={2173--2204},
  year={2022},
  publisher={Cambridge University Press}
}

@article{domat2022big,
  title={Big pure mapping class groups are never perfect},
  author={Domat, George and Dickmann, Ryan},
  journal={Mathematical Research Letters},
  volume={29},
  number={3},
  pages={691--726},
  year={2022},
  publisher={International Press of Boston}
}

@phdthesis{long2024diverse,
  title={Diverse aspects of hyperbolic geometry and group dynamics},
  author={Long, Yusen},
  year={2024},
  school={Universit{\'e} Paris-Saclay}
}

@article{schoenfeld1975alternate,
  title={An alternate characterization of the Cantor set},
  author={Gruenhage, Gary and Schoenfeld, Alan H.},
  journal={Proceedings of the American Mathematical Society},
  volume={53},
  number={1},
  pages={235--236},
  year={1975}
}

@article{moore2013amenability,
  title={Amenability and Ramsey theory},
  author={Moore, Justin Tatch},
  journal={Fundamenta Mathematicae},
  volume={220},
  pages={263--280},
  year={2013},
  publisher={Instytut Matematyczny Polskiej Akademii Nauk}
}

@article{domat2023coarse,
  title={Coarse geometry of pure mapping class groups of infinite graphs},
  author={Domat, George and Hoganson, Hannah and Kwak, Sanghoon},
  journal={Advances in Mathematics},
  volume={413},
  pages={108836},
  year={2023},
  publisher={Elsevier}
}

@article{mann2023large,
  title={Large-scale geometry of big mapping class groups},
  author={Mann, Kathryn and Rafi, Kasra},
  journal={Geometry \& Topology},
  volume={27},
  number={6},
  pages={2237--2296},
  year={2023},
  publisher={Mathematical Sciences Publishers}
}

@article{mann2024two,
  title={Two results on end spaces of infinite type surfaces},
  author={Mann, Kathryn and Rafi, Kasra},
  journal={Michigan Mathematical Journal},
  volume={74},
  number={5},
  pages={1109--1116},
  year={2024},
  publisher={University of Michigan, Department of Mathematics}
}

@misc{chandran2021infinite,
  title={Infinite-type surfaces and mapping class groups: open problems},
  author={Chandran, Yassin and Patel, Priyam and Vlamis, Nicholas G},
  year={2021},
  note={Availble on: \texttt{https://vlamis.nyc/assets/files/InfTypeProblems.pdf}}
}

@book{rosendal2021coarse,
  title={Coarse geometry of topological groups},
  author={Rosendal, Christian},
  volume={223},
  year={2021},
  publisher={Cambridge University Press}
}

@article{schaffer2024graphs,
  title={Graphs of curves and arcs quasi-isometric to big mapping class groups},
  author={Schaffer-Cohen, Anschel},
  journal={Groups, Geometry, and Dynamics},
  volume={18},
  number={2},
  pages={705--735},
  year={2024}
}

@book{ghys2013groupes,
  title={Sur les groupes hyperboliques d’apr{\`e}s {M}ikhael {G}romov},
  author={Ghys, Etienne and da la Harpe, Pierre},
  volume={83},
  year={2013},
  publisher={Springer Science \& Business Media}
}

@article{rasmussen2021wwpd,
  title={WWPD elements of big mapping class groups},
  author={Rasmussen, Alexander J},
  journal={Groups, Geometry, and Dynamics},
  volume={15},
  number={3},
  pages={825--848},
  year={2021}
}

\noindent{\sc Yusen Long}\\
\noindent{\sc Université Paris-Est Créteil, CNRS, LAMA UMR8050, F-94010 Créteil, France}

\noindent{\it Email address:} {\tt \href{mailto:yusen.long@u-pec.fr}{yusen.long@u-pec.fr}}

\end{document}